\documentclass[a4paper,12pt,reqno]{amsart}

\usepackage{t1enc}
\usepackage[english]{babel}
\usepackage{amsmath,amssymb,eucal,amsthm}
\usepackage{bm}
\usepackage{graphicx}
\usepackage{mathrsfs}
\usepackage{mathptmx}
\usepackage{latexsym}
\usepackage{ulem}

\setlength{\textwidth}{15cm}
\setlength{\textheight}{23cm}
\setlength{\oddsidemargin}{1cm}
\setlength{\evensidemargin}{0cm}
\setlength{\topmargin}{-1cm}

\newtheorem{theorem}{Theorem}
\newtheorem{lemma}[theorem]{Lemma}

\theoremstyle{definition}
\newtheorem*{remark}{Remark}

\def\pp{\mathbb{P}}
\def\nn{\mathbb{N}}
\def\zz{\mathbb{Z}}

\def\cc{\mathbb{C}}
\def\qq{\mathbb{Q}}
\def\ii{\mathbb{I}}

\def\gb{\mathfrak{B}}

\def\vf{\varphi}
\def\ve{\varepsilon}
\def\vph{{\varphi}_h}

\def\d{{\rm d}}
\def\meas{{\rm meas}}
\def\N{{\mathcal{N}}}

\def\uH{{\underline H}}
\def\uZ{{\underline Z}}
\def\us{\underline{s}}
\def\uO{{\underline \Omega}}
\def\uA{{\underline \alpha}}
\def\uom{{\underline \omega}}
\def\ugb{{\underline \gb}}

\def\ho{{\widehat\omega}}
\def\hP{{\widehat P}}

\def\st{{\widetilde{S}}}
\def\tvf{{\widetilde{\varphi}}}

\def\no{{\mathbb{N}_0}}

\markboth{\today}{\today}

\begin{document}
\hfill\texttt{\jobname.tex}\qquad\today

\bigskip
\title[Discrete case of mixed joint universality]
{On mixed joint discrete universality for a class of zeta-functions: One more case}

\author{Roma Ka{\v c}inskait{\.e}}

\address{R. Ka{\v c}inskait{\.e} \\
Department of Mathematics and Statistics, Vytautas Magnus University, Vileikos 8, Kaunas LT-44404, Lithuania}
\email{roma.kacinskaite@vdu.lt}

\author{Kohji Matsumoto}

\address{K. Matsumoto, Graduate School of Mathematics, Nagoya University, Chikusa-ku,
Nagoya 464-8602, Japan}
\email{kohjimat@math.nagoya-u.ac.jp}

\author{\L ukasz Pa\' nkowski}

\address{\L. Pa\'nkowski, Faculty of Mathematics and Computer Science,
	Adam Mickiewicz University, Uniwersytetu Pozna\'nskiego 4, 61-614 Pozna\'n, Poland}
\email{lpan@amu.edu.pl}

\date{}

\begin{abstract}
We prove a new case of mixed discrete joint universality theorem on approxi\-ma\-tion of certain target couple of analytic functions by the shifts of a pair
consisting of the function $\vf(s)$ belonging to wide class 
of Matsumoto zeta-functions
and the periodic Hurwitz zeta-function $\zeta(s,\alpha;\gb)$. 
We work under the condition that the common difference of arithmetical progression $h>0$ is such that $\exp\{\frac{2 \pi}{h}\}$ is a rational number and the parameter $\alpha$ is a transcendental number.   The essential difference from the result in our previous article
\cite{RK-KM-2021} is that here we do not study the class of partial zeta-functions $\vph(s)$, but work with the class of the original functions $\varphi(s)$.
\end{abstract}

\maketitle

{\small{{\bf{Keywords:}} {approximation, discrete shift, Euler products, limit theorem, periodic Hurwitz zeta-function, rational number,
Matsumoto zeta-function, value distribution, universality, weak convergence.}}}

{\small{{\bf{AMS classification:}}} 11M06, 11M41, 11M36, 41A30, 30E10.}


\section{Introduction with statement of new result}\label{sec-1}

As usual, let $s=\sigma+it$ be a complex variable, and by $\mathbb{P}$, $\mathbb{N}$, $\mathbb{N}_0$, $\mathbb{Z}$, $\mathbb{Q}$ and $\mathbb{C}$
denote the sets of all primes, positive integers, non-negative integers, integers, rational numbers and complex numbers, respectively.

Let $\gb=\{b_m: m \in \no\}$ be a periodic
sequence of complex numbers $b_m$ with a minimal period $k \in \mathbb{N}$,
 and suppose that $\alpha$ is a fixed real number, $0<\alpha \leq 1$.   The periodic Hurwitz zeta-function is defined by the Dirichet series
$$
\zeta(s,\alpha;\gb)=\sum_{m=0}^{\infty}\frac{b_m}{(m+\alpha)^s}
$$
in the half-plane $\sigma>1$ (see \cite{AL-2006}).
In view of the periodicity of the sequence $\gb$, for $\sigma>1$, the function $\zeta(s,\alpha;\gb)$ can be expressed as a linear combination of classical Hurwitz zeta-functions $\zeta(s,\alpha):=\sum_{m=0}^{\infty}\frac{1}{(m+\alpha)^s}$. More precisely, we have 
$$
\zeta(s,\alpha;\gb)=\frac{1}{k^s}\sum_{l=0}^{k-1}b_l\zeta\left(s,\frac{l+\alpha}{k}\right).
$$
From this we deduce that the function $\zeta(s,\alpha;\gb)$ can be analytically continued to the whole $s$-plane except for a possible simple pole at the point $s=1$ with residue $\frac{b_0+\cdots+b_{k-1}}{k}$.

The polynomial Euler products $\tvf(s)$ or so-called Matsumoto zeta-functions are given  by the formula
\begin{equation}\label{rk-fo-1}
\tvf(s)=\prod_{m=1}^{\infty}\prod_{j=1}^{g(m)}\left(1-\frac{a_m^{(j)}}{p_m^{sf(j,m)}}\right)^{-1}
\end{equation}
for $m \in \mathbb{N}$, $g(m)\in \mathbb{N}$, $j \in \mathbb{N}$, $1 \leq j \leq g(m)$, $f(j,m)\in \mathbb{N}$, $a_m^{(j)}\in\mathbb{C}$ and the $m$th prime number  $p_m$ (see \cite{KM-1990}). Suppose that, for non-negative constants
$\alpha_0$ and $\beta_0$, the inequalities
\begin{equation}\label{rk-fo-2}
g(m)\leq C_1 p_m^{\alpha_0} \quad \text{and} \quad |a_m^{(j)}|\leq p_m^{\beta_0}
\end{equation}
hold with a positive constant $C_1$. In view of this assumption, the right-side of the equality \eqref{rk-fo-1} converges absolutely for $\sigma>\alpha_0+\beta_0+1$, and in this half-plane the function $\tvf(s)$ can be presented by the Dirichlet series
$$
\tvf(s)=\sum_{k=1}^{\infty}\frac{{\widetilde c}_k}{k^s},
$$
where the coefficients ${\widetilde c}_k$ satisfy an estimate ${\widetilde c}_k=O(k^{\alpha_0+\beta_0+\varepsilon})$ with every positive $\varepsilon$ if all prime factors of $k$ are large
(for the comments, see  \cite{RK-KM-2017-BAMS}). For brevity, denote the shifted version of the function $\tvf(s)$ by
\begin{equation}\label{fo-3}
 \vf(s):=\tvf(s+\alpha_0+\beta_0)=\sum_{k=1}^{\infty}\frac{c_k}{k^s},
\end{equation}
where $c_k:=\frac{{\widetilde c}_k}{k^{\alpha_0+\beta_0}}$, or as a polynomial Euler product by
\begin{equation}\label{fo-3-prod}
\vf(s):=
\prod_{m=1}^{\infty}\prod_{j=1}^{g(m)}\left(1-\frac{a_m^{(j)}}{p_m^{(s+\alpha_0+\beta_0)f(j,m)}}\right)^{-1}.
\end{equation}
 Then $\vf(s)$ is absolutely convergent for $\sigma>1$. Also, let the function $\varphi(s)$ be such that:
\begin{itemize}
  \item[(i)] it can be continued meromorphically to
$\sigma\geq\sigma_0$, $\frac{1}{2}\leq\sigma_0<1$, and all poles in this
region are included in a compact set which has no intersection with the line $\sigma=\sigma_0$,
  \item[(ii)] for $\sigma\geq\sigma_0$,
  $
  \varphi(\sigma+it)=O(|t|^{C_2})
  $
   holds with a positive constant $C_2$,
  \item[(iii)] it holds the mean-value estimate
\begin{equation}\label{rk-eq-2-5}
\int_0^T|\varphi(\sigma_0+it)|^2 dt=O(T), \quad T \to \infty.
\end{equation}
\end{itemize}
We denote the set of all such functions $\varphi(s)$ by $\mathcal{M}$.

Both of the functions defined above are the functions under our interest in the present paper. The main aim of the article is to give  one more option of the solution of the problem on discrete approximation of certain pair of analytic functions by the shifts of the pair consisting of an element of the class $\mathcal{M}$ and the periodic Hurwitz zeta-function $\zeta(s,\alpha;\gb)$, or in other words, the mixed joint discrete universa\-lity property for $\big(\vf(s),\zeta(s,\alpha;\gb)\big)$.
To apply the standard method to the proof of universality for the mentioned pair, we need further assumption for the function $\vf(s)$, i.e., it belongs to the Steuding class $\st$.

We recall that the function $\vf(s)$ belongs to the class $\st$ if the following conditions are satisfied:
\begin{itemize}
  \item[(a)] there exists a Dirichlet series expansion
  $$
  \varphi(s)=\sum_{m=1}^{\infty}\frac{a(m)}{m^s}
  $$
  with $a(m)=O(m^\varepsilon)$ for every $\varepsilon>0$;
  \item[(b)] there exists $\sigma_\varphi<1$ such that $\varphi(s)$ can be meromorphically continued to the half-plane $\sigma>\sigma_\varphi$, and is holomorphic except for a pole at $s=1$;
  \item[(c)] for every fixed $\sigma>\sigma_\varphi$, there exists a constant $C_3 \geq 0$ such that
  $
  \varphi(\sigma+it)=O(|t|^{C_3+\varepsilon})
  $
  for any $\varepsilon>0$;
  \item[(d)] there exists the Euler product expansion over primes, i.e.,
  $$
  \varphi(s)=\prod_{p \in \mathbb{P}}\prod_{j=1}^{l}\left(1-\frac{a_j(p)}{p^s}\right)^{-1};
  $$
  \item[(e)] there exists a constant $\kappa>0$ such that
  $$
  \lim_{x \to \infty}\frac{1}{\pi(x)}\sum_{p \leq x}|a(p)|^2=\kappa,
  $$
  where $\pi(x)$ counts the number of primes $p$ not exceeding $x$.
\end{itemize}
Denote by $\sigma^*$ the infimum of all $\sigma_1$ such that
$$
\frac{1}{2T}\int_{-T}^{T}|\vf(\sigma+it)|^2 \d t \sim \sum_{m=1}^{\infty}\frac{|a(m)|^2}{m^{2\sigma}}
$$
holds for any $\sigma \geq \sigma_1$.
Then $\frac{1}{2}\leq \sigma^*<1$.
This implies that $\st \subset\mathcal{M}$.

Throughout this paper we use the following notation and definitions. By $H(G)$ we denote the space of holomorphic functions on a region $G$
with the uniform convergence topology, where $G$ is any open region in the complex plane.
Let $K \subset \mathbb{C}$ be a compact set. Denote by $H^c(K)$
the set of all $\mathbb{C}$-valued continuous  functions on $K$ and holomorphic in the interior of $K$, and by $H_0^c(K)$ the subset of
elements of $H^c(K)$ which are non-zero on $K$, respectively. Let $D(a,b)=\{s \in \mathbb{C}: a <\sigma <b\}$ for every $a<b$. Denote
by $\meas\{A\}$ the Lebesgue measure of the measurable set $A \subset \mathbb{R}$, and by ${\mathcal B}(S)$ the set of all Borel subsets of a topological space $S$, while $\#\{A\}$ means the cardinality of the set $A$.
By $h$ we mean a positive number, which satisfies certain conditions.

In 2017, the first result related to the mixed joint discrete universality propety for the pair $\big(\vf(s),\zeta(s,\alpha;\gb)\big)$ was obtained by the first two authors  (see \cite{RK-KM-2017-Pal}) under a condition that the elements of the set
$$
L({\mathbb{P}},\alpha,h):=\bigg\{\big(\log p: p \in {\mathbb{P}}\big), \big(\log(m+\alpha): m \in \nn_0 \big),
\frac{2 \pi}{h}\bigg\}
$$
 are linearly independent over $\qq$.

\begin{theorem}[\cite{RK-KM-2017-Pal}]\label{rk-dis-th-2017}
Suppose that $\vf(s)$ belongs to the Steuding class $\st$, and 
the above linear independence condition is satisfied.
Let $K_1$ be a compact subset of $D(\sigma^*,1)$, $K_{2}$ be a
compact subset of $D\big(\frac{1}{2},1\big)$, both with connected complements.  Then, for any $f_1(s) \in H_0^c(K_1)$, $f_{2}(s)\in H^c(K_{2})$ and every $\varepsilon>0$, it holds that
\begin{eqnarray*}
		\liminf\limits_{N \to \infty}
		\frac{1}{N+1}
		\#
		\bigg\{0\leq k \leq N:
		&& \sup\limits_{s \in K_1}|\varphi(s+ikh)-f_1(s)|<\varepsilon, \\ &&  \sup\limits_{s\in K_2}|\zeta(s+ikh,\alpha;\gb)-f_2(s)|<\varepsilon\bigg\}>0.
\end{eqnarray*}
\end{theorem}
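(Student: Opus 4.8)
The plan is to follow the standard Voronin-type strategy adapted to the mixed discrete setting: first establish a joint discrete limit theorem in the space of analytic functions, then identify the support of the limiting measure, and finally combine these with Mergelyan's approximation theorem and the portmanteau principle. Write $H:=H(D(\sigma^*,1))\times H(D(\tfrac12,1))$ and, for a Borel set $A$, define
$$
P_N(A)=\frac{1}{N+1}\#\Big\{0\le k\le N:\big(\vf(s+ikh),\,\zeta(s+ikh,\alpha;\gb)\big)\in A\Big\}.
$$
The first goal is to prove that $P_N$ converges weakly to an explicitly describable probability measure $P$ on $\big(H,\mathcal B(H)\big)$.

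The natural parameter space is the compact abelian group $\Omega=\prod_{p\in\pp}\gamma_p\times\prod_{m\in\no}\gamma_m$, where each $\gamma_p,\gamma_m$ is the unit circle, equipped with the probability Haar measure $m_H$, which factorizes over the two groups of coordinates. First I would study
$$
Q_N(A)=\frac{1}{N+1}\#\Big\{0\le k\le N:\big((p^{-ikh})_{p\in\pp},((m+\alpha)^{-ikh})_{m\in\no}\big)\in A\Big\}.
$$
Applying the Weyl criterion, the Fourier transform of $Q_N$ is a geometric sum that tends to $0$ unless $h\big(\sum_p k_p\log p+\sum_m l_m\log(m+\alpha)\big)\in2\pi\zz$; the assumed linear independence of the elements of $L(\pp,\alpha,h)$ over $\qq$ forces all integer exponents to vanish, so $Q_N$ converges weakly to $m_H$. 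This is the one place where the linear-independence hypothesis is essential. I would then introduce the $\Omega$-indexed random elements
$$
\vf(s,\omega_1)=\prod_{m=1}^{\infty}\prod_{j=1}^{g(m)}\Big(1-\frac{a_m^{(j)}\omega_1(p_m)^{f(j,m)}}{p_m^{(s+\alpha_0+\beta_0)f(j,m)}}\Big)^{-1},\qquad\zeta(s,\alpha,\omega_2;\gb)=\sum_{m=0}^{\infty}\frac{b_m\omega_2(m)}{(m+\alpha)^s},
$$
verify that they converge almost surely in $H$ (classical for the periodic Hurwitz factor; for the $\vf$-factor this rests on the Steuding-class conditions, in particular the mean-square bound (iii) together with (c), which control the tail over large primes on every strip $\sigma>\sigma^*$), and set $u(\omega)=\big(\vf(s,\omega_1),\zeta(s,\alpha,\omega_2;\gb)\big)$. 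The continuous-mapping theorem applied to $Q_N\Rightarrow m_H$, after passing from the absolutely convergent truncations to the full functions by an approximation-in-mean argument in the metric of $H$, yields $P_N\Rightarrow P:=m_H\circ u^{-1}$.

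Next I would determine the support of $P$. Since $m_H$ is a product measure and $\vf(s,\omega_1)$ depends only on the prime coordinates while $\zeta(s,\alpha,\omega_2;\gb)$ depends only on the $\no$-coordinates, the support factorizes. Because $\vf\in\st$ possesses the Euler product (d) and satisfies the mean condition (e), a Steuding-type argument shows that the first marginal support is
$$
S_{\vf}=\{g\in H(D(\sigma^*,1)):g(s)\neq0 \text{ on } D(\sigma^*,1)\ \text{or}\ g\equiv0\}.
$$
On the other hand, the periodic Hurwitz zeta-function has no Euler product, and under the present (transcendence-type) linear independence condition its marginal support is all of $H(D(\tfrac12,1))$. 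Hence the support of $P$ equals $S_{\vf}\times H(D(\tfrac12,1))$.

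Finally, given $f_1\in H_0^c(K_1)$ and $f_2\in H^c(K_2)$, I would invoke Mergelyan's theorem: as $f_1$ is non-vanishing on $K_1$, write $f_1=\exp(g_1)$ and approximate $g_1$ by a polynomial $q_1$, so that $\exp(q_1)$ is non-vanishing, lies in $S_{\vf}$, and approximates $f_1$ uniformly on $K_1$; approximate $f_2$ similarly by a polynomial. The set
$$
G=\Big\{(g_1,g_2)\in H:\sup_{s\in K_1}|g_1(s)-f_1(s)|<\varepsilon,\ \sup_{s\in K_2}|g_2(s)-f_2(s)|<\varepsilon\Big\}
$$
is open and, by the support computation, meets the support of $P$, so $P(G)>0$. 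Since $P_N(G)$ is exactly the density in the statement, the portmanteau theorem for open sets gives $\liminf_{N\to\infty}P_N(G)\ge P(G)>0$, which is the claim. I expect the main obstacle to be the limit-theorem step with the correctly identified limit measure, specifically the almost-sure convergence and continuity of $u$ on the strip $\sigma>\sigma^*$ and the accompanying Steuding-class support computation, rather than the torus equidistribution, which is clean once the linear independence is in hand.
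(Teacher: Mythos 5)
Your overall architecture --- torus equidistribution via the Weyl/Fourier criterion, lifting to the function space through absolutely convergent truncations, support identification, then Mergelyan plus the portmanteau inequality --- is exactly the scheme behind this theorem (which the present paper only quotes from \cite{RK-KM-2017-Pal}; it runs the same machinery in its Sections~\ref{sec-2}--\ref{sec-4} for the new Theorem~\ref{rk-dis-th-new}), and you invoke the linear independence of $L(\pp,\alpha,h)$ at the right place in the Weyl step, where it forces all the integer exponents $k_p$, $l_m$ and $l_0$ to vanish.

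The one step that does not hold as written is the identification of the limit measure. You cannot apply the continuous-mapping theorem to $u$, because $u(\omega)=\big(\vf(s,\omega_1),\zeta(s,\alpha,\omega_2;\gb)\big)$ is \emph{not} a continuous map on $\Omega$: its defining series converge only almost everywhere, so $u$ is merely an a.e.-defined $H$-valued random element. What the continuous-mapping theorem legitimately gives is $P_{N,n}=Q_N\circ u_n^{-1}\Rightarrow m_H\circ u_n^{-1}$ for the truncations $u_n$ (which are continuous), and then Billingsley's Theorem~4.2 together with your approximation-in-mean estimate shows that $P_N$ converges weakly to $\lim_{n}m_H\circ u_n^{-1}$; but you still owe the argument that this limit equals $m_H\circ u^{-1}$. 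The paper (and the 2017 reference) close precisely this hole with a lemma you never mention: the ergodicity of the shift $\Phi_h(\omega)=\big((p^{-ih})_{p},((m+\alpha)^{-ih})_{m}\big)\cdot\omega$ on the torus --- proved by the same character computation, i.e.\ $\chi(f_h)\neq1$ for every nontrivial character $\chi$, which is the \emph{second} place where the linear independence hypothesis is used --- followed by the Birkhoff--Khintchine theorem applied to the $\omega$-shifted measures $\widehat{P}_N$. Alternatively, your route can be repaired without ergodicity: prove that $u_n\to u$ almost surely in the metric of $H$ (the a.s.\ convergence you allude to) and deduce $m_H\circ u_n^{-1}\Rightarrow m_H\circ u^{-1}$ by dominated convergence applied to bounded continuous functionals. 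Either repair is standard, but as the proposal stands the limit is never actually identified as $P_{\uZ}$, and everything downstream (the support statement, hence positivity of $P(G)$) rests on that identification. A minor additional point: the support results you cite from the Steuding-class literature are established for bounded rectangles $D_M$, $D_T$ containing $K_1$, $K_2$ rather than for the full strips; since $K_1$, $K_2$ are compact, restricting to such rectangles, as the paper does, costs nothing, but you should state the support theorem in that form.
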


Later this result was extended to the cases, when one periodic Hurwitz zeta-function $\zeta(s,\alpha;\gb)$ was replaced by the collection of such functions (assuming some extra rank conditions), as well modifying the set $L(\pp, \alpha,h)$ with respect to $h$ 
(see \cite{RK-KM-2019} \cite{RK-KM-2020}). 
It is necessary to mention that in general the arithmetic nature of the number $h$ plays a crucial role 
in the proof of discrete universality type theorems.

In \cite{RK-KM-2021}, we consider one more case of mixed joint discrete universality for the functions under our interest. More precisely, we prove the universality for a class of partial zeta-functions $\vph(s)$ (defined below) under the condition that $\alpha$ is transcendental and $\exp\big\{\frac{2\pi}{h}\big\}\in\mathbb{Q}$. 	Therefore, the arithmetic nature of $h$ 
differs from that in Theorem~\ref{rk-dis-th-2021}.

Suppose that $\exp\{\frac{2 \pi}{h}\}\in\mathbb{Q}$. 
Then we may write
\begin{equation}\label{K-1}
\exp\left(\frac{2 \pi}{h}\right)=\frac{a}{b}, \quad a, b \in \mathbb{N}, \quad (a,b)=1.
\end{equation}
We write the decompositions of $a$ and $b$ into prime divisors as
\begin{align}\label{K-1.5}
	a=\prod_{p \in \pp_1}p^{\alpha_p} \quad \text{for} \quad \alpha_p>0\quad \text{and}\quad
	\frac{1}{b}=\prod_{p \in \pp_2}p^{\alpha_p} \quad \text{for} \quad \alpha_p<0,
\end{align}
with $\pp_1 \cap \pp_2=\emptyset$.   Put $\pp_0=\pp_1\cup \pp_2$. Then
	\begin{equation}\label{K-2}
		\exp\left(\frac{2\pi}{h}\right)=\frac{a}{b}=\prod_{p\in \pp_0}p^{\alpha_p}.
	\end{equation}
Let $\pp_h:=\pp \setminus \pp_0$. Denote
the set of all $m \in \nn$ such that $p_m\in \pp_0$ by $\nn_0$, and let
$\nn_h:=\nn \setminus \nn_0$.  

Under the above notation, for $\sigma>1$, we define a partial Matsumoto zeta-function ${\vph}(s)$ by the formula
\begin{align}\label{rk-fo-4}
\vph(s) =\prod_{m\in\mathbb{N}_h}\prod_{j=1}^{g(m)}\left(1-\frac{a_m^{(j)}}{p_m^{(s+\alpha_0+\beta_0)f(j,m)}}\right)^{-1}.
\end{align}
Note that the dif\-fe\-ren\-ce between $\varphi_h(s)$ and $\varphi(s)$ is only finitely many Euler factors. Since the function $\varphi_h(s)$ satisfies the properties (i), (ii) and (iii) also, then $\varphi_h(s)\in\mathcal{M}$, and, if $\varphi(s)\in\widetilde{S}$, then $\varphi_h(s)\in\widetilde{S}$.

\begin{theorem}[\cite{RK-KM-2021}]\label{rk-dis-th-2021}
	Suppose that $\alpha$ is transcendental, $h>0$, and $\exp\big\{\frac{2\pi}{h}\big\}$ is a rational number. Let $\varphi_h(s)\in\st$.
	Suppose $K_1$, $K_2$, $f_1(s)$ and $f_2(s)$ satisfy the conditions of Theorem~\ref{rk-dis-th-2017}.
	Then, for every $\varepsilon>0$, it holds that
	\begin{align*}
	\liminf\limits_{N \to \infty}
	\frac{1}{N+1}
	\#
	\bigg\{0\leq k \leq N:
	& \sup\limits_{s \in K_1}|\varphi_h(s+ikh)-f_1(s)|<\varepsilon, \\ &  \sup\limits_{s\in K_2}|\zeta(s+ikh,\alpha;\gb)-f_2(s)|<\varepsilon\bigg\}>0.
	\end{align*}
\end{theorem}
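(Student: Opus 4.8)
The plan is to run the Bagchi-type probabilistic scheme in the discrete mixed-joint setting; the only genuinely new ingredient is how the arithmetic condition $\exp\{2\pi/h\}\in\qq$ is exploited through the splitting $\pp=\pp_0\cup\pp_h$. I would write $D_1=D(\sigma^*,1)$, $D_2=D(\tfrac{1}{2},1)$, set $\gamma=\{z\in\cc:|z|=1\}$, and form the compact abelian group
$$
\Omega=\prod_{p\in\pp_h}\gamma\times\prod_{m\in\no}\gamma
$$
(each factor a copy of $\gamma$) with probability Haar measure $m_H$; a generic point is $\omega=\big((\omega(p))_{p\in\pp_h},(\omega(m))_{m\in\no}\big)$. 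The heart of the argument is to prove that the sequence
$$
\underline{x}_k=\Big(\big(p^{-ikh}\big)_{p\in\pp_h},\ \big((m+\alpha)^{-ikh}\big)_{m\in\no}\Big),\qquad k=0,1,2,\dots,
$$
is uniformly distributed in $\Omega$ with respect to $m_H$. By the Weyl criterion this reduces to showing that for every nontrivial character, given by integers $(k_p)_{p\in\pp_h}$, $(l_m)_{m\in\no}$ (finitely many nonzero, not all zero), the number $\theta=\sum_{p\in\pp_h}k_p\log p+\sum_{m\in\no}l_m\log(m+\alpha)$ satisfies $h\theta\notin2\pi\zz$. If instead $h\theta=2\pi r$ for some $r\in\zz$, then since $\frac{2\pi}{h}=\sum_{p\in\pp_0}\alpha_p\log p$ by \eqref{K-2}, exponentiating yields the multiplicative relation
$$
\prod_{p\in\pp_h}p^{k_p}\prod_{p\in\pp_0}p^{-r\alpha_p}\prod_{m\in\no}(m+\alpha)^{l_m}=1 .
$$
Clearing denominators turns this into a polynomial identity in $\alpha$ with rational coefficients, so the transcendence of $\alpha$ forces every $l_m=0$; unique factorization over the disjoint sets $\pp_h$ and $\pp_0$ then forces all $k_p=0$ and $r\alpha_p=0$, hence $r=0$ as $\alpha_p\neq0$ on $\pp_0$. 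Thus only the trivial character occurs. This is exactly where the hypotheses interact: for the full $\vf$ the $\pp_0$-factors would be reinstated and the relation $\frac{2\pi}{h}=\sum_{p\in\pp_0}\alpha_p\log p$ would produce genuine nontrivial characters, destroying equidistribution.

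Next I would derive the discrete limit theorem. On $\big(H(D_1)\times H(D_2),\mathcal{B}(H(D_1)\times H(D_2))\big)$ I consider
$$
P_N(A)=\frac{1}{N+1}\#\Big\{0\le k\le N:\big(\vph(\cdot+ikh),\,\zeta(\cdot+ikh,\alpha;\gb)\big)\in A\Big\},
$$
and let $\vph(s,\omega)$, $\zeta(s,\alpha,\omega;\gb)$ be obtained by inserting the variables $\omega(p)$ into the Euler product \eqref{rk-fo-4} and $\omega(m)$ into the Dirichlet series of $\zeta(s,\alpha;\gb)$. Using the equidistribution above, the continuity of $\omega\mapsto\big(\vph(\cdot,\omega),\zeta(\cdot,\alpha,\omega;\gb)\big)$ into $H(D_1)\times H(D_2)$, an absolutely convergent truncation of both series, and the second-moment bounds \eqref{rk-eq-2-5} and its analogue for $\zeta(s,\alpha;\gb)$ to control the tails inside the strip, the standard continuous-mapping and approximation-in-mean argument shows that $P_N$ converges weakly to the pushforward $P$ of $m_H$ under this map.

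I would then identify the support. Since $m_H$ is a product measure and the two coordinates depend on the disjoint families $(\omega(p))_{p\in\pp_h}$ and $(\omega(m))_{m\in\no}$, one has $P=P_1\times P_2$ and $\mathrm{supp}\,P=\mathrm{supp}\,P_1\times\mathrm{supp}\,P_2$. As $\vph\in\st$ is a polynomial Euler product over the still infinite set $\pp_h$ (only finitely many factors deleted), the Steuding-class support theorem gives $\mathrm{supp}\,P_1=\{g\in H(D_1):g(s)\neq0\text{ on }D_1\}\cup\{0\}$; since $\alpha$ is transcendental, the support theorem for the periodic Hurwitz zeta-function gives $\mathrm{supp}\,P_2=H(D_2)$. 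Hence every pair $(g_1,g_2)$ with $g_1$ nonvanishing on $D_1$ and $g_2$ arbitrary lies in $\mathrm{supp}\,P$.

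Finally I would combine these. Given $f_1\in H_0^c(K_1)$, $f_2\in H^c(K_2)$ and $\ve>0$, Mergelyan's theorem (applied to a holomorphic logarithm of the nonvanishing $f_1$, and directly to $f_2$) produces functions holomorphic on $D_1,D_2$ that approximate $f_1,f_2$ within $\tfrac{\ve}{2}$ on $K_1,K_2$ and lie in $\mathrm{supp}\,P$. The set $G=\{(g_1,g_2):\sup_{K_1}|g_1-f_1|<\ve,\ \sup_{K_2}|g_2-f_2|<\ve\}$ is open and contains a neighbourhood of a support point, so $P(G)>0$, whence by the Portmanteau theorem $\liminf_{N\to\infty}P_N(G)\ge P(G)>0$, which is precisely the asserted positivity. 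The main obstacle is the equidistribution lemma of the first paragraph: it is the only point where $\exp\{2\pi/h\}\in\qq$, the transcendence of $\alpha$, and the passage from $\vf$ to $\vph$ must cooperate, and ruling out every nontrivial character—in particular excluding interference between the $\pp_0$-relation and the Hurwitz shifts—is the delicate step, the remaining limit-theorem and support arguments being adaptations of known results.
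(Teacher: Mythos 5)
Your proposal is correct and follows essentially the same Bagchi-type scheme as the paper's own proof: the torus $\prod_{p\in\pp_h}\gamma\times\prod_{m\in\no}\gamma$ over the primes outside $\pp_0$, the identical key arithmetic step (a character relation $h\theta\in 2\pi\zz$ would, after exponentiation, give an algebraic equation for the transcendental $\alpha$, forcing all $l_m=0$, and then unique factorization over the disjoint sets $\pp_h$ and $\pp_0$ forces all $k_p=0$ and $r=0$), then the truncation/approximation-in-mean limit theorem, the product support $S_\varphi\times H(D_2)$, and Mergelyan plus the Portmanteau inequality. The only step you treat differently is the identification of the limit measure: the paper proves ergodicity of the shift $\omega\mapsto f_h\cdot\omega$ (using the very same character-nonvanishing fact) and invokes the Birkhoff--Khintchine theorem, whereas you assert direct weak convergence to the pushforward of the Haar measure; that variant is standard and completable, since the truncated random series converge almost surely, uniformly on compacta, to the full random elements.
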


The novelty of the present paper is as follows: we prove a new case of mixed joint discrete universality theorem for the tuple $\big(\vf(s),\zeta(s,\alpha;\gb)\big)$ under the same conditions as in the statement of Theorem~\ref{rk-dis-th-2021}, but instead of the class of partial zeta-functions $\vph(s)$, we study the class of $\vf(s)$ itself.   The full statement of our new result is as follows.

\begin{theorem}\label{rk-dis-th-new}
	Suppose that $\alpha$ is transcendental number, for $h>0$,  $\exp\big\{\frac{2\pi}{h}\big\}$ is a rational number, and $\vf(s)$ belongs to the Steuding class $\st$.
	Let $K_1$ be a compact subset of $D(\sigma^*,1)$, $K_{2}$ be a
	compact subset of $D\big(\frac{1}{2},1\big)$, both with connected complements.  Then, for any $f_1(s) \in H_0^c(K_1)$, $f_{2}(s)\in H^c(K_{2})$ and every $\ve>0$, it holds the universality inequality of the form
	\begin{align*}
	\liminf\limits_{N \to \infty}
	\frac{1}{N+1}
	\#
	\bigg\{0\leq k \leq N:
	& \sup\limits_{s \in K_1}|\varphi(s+ikh)-f_1(s)|<\varepsilon, \\ &  \sup\limits_{s\in K_2}|\zeta(s+ikh,\alpha;\gb)-f_2(s)|<\varepsilon\bigg\}>0.
	\end{align*}
\end{theorem}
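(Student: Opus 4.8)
The plan is to follow the standard probabilistic route to universality through a joint discrete limit theorem, and then to reduce the behaviour of $\vf(s)$ to that of the partial function $\vph(s)$ already treated in Theorem~\ref{rk-dis-th-2021}, by exploiting the factorization of the Euler product over the two prime sets $\pp_0$ and $\pp_h$. Write $D_1=D(\sigma^*,1)$ and $D_2=D(\frac12,1)$. I would first introduce the discrete probability measures
\[
P_N(A)=\frac{1}{N+1}\#\bigl\{0\le k\le N:\bigl(\vf(\cdot+ikh),\zeta(\cdot+ikh,\alpha;\gb)\bigr)\in A\bigr\},\qquad A\in\mathcal B\bigl(H(D_1)\times H(D_2)\bigr),
\]
and prove that $P_N$ converges weakly, as $N\to\infty$, to an explicitly described limit measure $P$.

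The core of the limit theorem is the study of the closure of the discrete orbit $\{\,(kh\log p/2\pi \bmod 1)_{p\in\pp},\,(kh\log(m+\alpha)/2\pi \bmod 1)_{m\ge 0}:k\in\zz\,\}$ inside the corresponding product of unit circles. Since $\exp(2\pi/h)=a/b=\prod_{p\in\pp_0}p^{\alpha_p}$, taking logarithms produces the single $\qq$-linear relation $\sum_{p\in\pp_0}\alpha_p\log p=2\pi/h$ among the frequencies attached to the finitely many primes of $\pp_0$. Because the numbers $\log p$ ($p\in\pp$) are $\qq$-linearly independent, and because $\alpha$ is transcendental (so that the $\log(m+\alpha)$ are $\qq$-linearly independent and remain so jointly with the $\log p$ and with $2\pi/h$), this is, up to integer multiples, the only relation present; in particular it involves none of the primes of $\pp_h$ and none of the Hurwitz frequencies. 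Consequently the orbit closure is a direct product $G_0\times\Omega_h\times\Omega_\alpha$, where $\Omega_h$ is the full torus over $\pp_h$, $\Omega_\alpha$ is the full torus over the Hurwitz frequencies, and $G_0$ is the proper closed subgroup of the $\pp_0$-torus cut out by the relation above. This gives the limit theorem with $P$ the distribution of $\bigl(\vf(\cdot,\omega),\zeta(\cdot,\alpha;\gb,\hat{\omega})\bigr)$ under normalized Haar measure on $G_0\times\Omega_h\times\Omega_\alpha$.

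The decisive step is the determination of the support of $P$. Here I would use the factorization $\vf(s,\omega)=\vph(s,\omega_h)\,\vf_{\pp_0}(s,\omega_0)$, where $\vf_{\pp_0}$ collects the finitely many Euler factors over $\pp_0$ and $\omega=(\omega_0,\omega_h)$. A direct estimate using $|a_m^{(j)}|\le p_m^{\beta_0}$ shows that every factor $(1-a_m^{(j)}p_m^{-(s+\alpha_0+\beta_0)f(j,m)})^{-1}$ is holomorphic and free of zeros on $\sigma>0$, hence $\vf_{\pp_0}(\cdot,\omega_0)$ is a holomorphic, non-vanishing function on $D_1$ for every $\omega_0$. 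The argument establishing Theorem~\ref{rk-dis-th-2021} shows that the $\vph$-marginal of the limit measure has support equal to the set $S^{\mathrm{nv}}$ of all $g\in H(D_1)$ that are either non-vanishing or identically zero; this uses only the freedom of $\Omega_h$ and is untouched by the constraint defining $G_0$. Since $\omega_h$ is independent of $(\omega_0,\hat{\omega})$ and multiplication by the fixed non-vanishing function $\vf_{\pp_0}(\cdot,\omega_0)$ maps $S^{\mathrm{nv}}$ bijectively onto itself, the $\vf$-marginal again has support $S^{\mathrm{nv}}$. The periodic Hurwitz component, with $\alpha$ transcendental, has full support $H(D_2)$, and the independence recorded above yields the joint support $S^{\mathrm{nv}}\times H(D_2)$.

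Finally I would conclude as in the classical Voronin-type argument: given $f_1\in H_0^c(K_1)$ and $f_2\in H^c(K_2)$, Mergelyan's theorem lets me approximate $f_1$ uniformly on $K_1$ by a non-vanishing holomorphic function (for instance an exponential of a polynomial) lying in $S^{\mathrm{nv}}$ and $f_2$ by a polynomial, so that a suitable open neighbourhood of $(f_1,f_2)$ meets the support of $P$. Applying the portmanteau theorem to this neighbourhood then gives the positive lower density asserted in the theorem. I expect the support step to be the main obstacle, specifically verifying that the single linear relation forced by $\exp(2\pi/h)\in\qq$ entangles only the finitely many $\pp_0$-primes while leaving both the $\pp_h$-torus and the Hurwitz frequencies free and mutually independent; this is precisely where the transcendence of $\alpha$ and the linear independence of $\{\log p:p\in\pp\}$ are indispensable, and it is what permits transferring the full support of $\vph$ to $\vf$ despite the degeneracy of the $\pp_0$-component.
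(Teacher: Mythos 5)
Your proposal is correct, and its skeleton is the same as the paper's: a discrete limit theorem on the closure of the orbit generated by $\bigl((p^{-ih}:p\in\pp),((m+\alpha)^{-ih}:m\in\no)\bigr)$, then a determination of the support of the limit measure, then Mergelyan's theorem plus the portmanteau argument. Your group $G_0\times\Omega_h\times\Omega_\alpha$ is literally the paper's $\Omega_h^\N=\Omega_{1h}^\N\times\Omega_2$, since by Lemma~\ref{fact-1} the constraint $\omega_1(a)=\omega_1(b)$ cutting out $\Omega_{1h}^\N$ involves only the finitely many primes of $\pp_0$; and your claim that the relation forced by $\exp(2\pi/h)=a/b$ is, up to integer multiples, the only $\qq$-relation among the frequencies (by unique factorization and the transcendence of $\alpha$) is exactly condition \eqref{K-3} with Claims~1 and~2 of Lemma~\ref{Lemma-7-New}; the same computation also yields the ergodicity statement (Lemma~\ref{Lemma-10-New}) that the paper needs, together with the approximation Lemmas~\ref{Lemma-8-New}--\ref{Lemma-11-New} and Birkhoff--Khintchine, to identify the limit as $P_\uZ$ --- a chain you compress into one sentence, though the decisive arithmetic input is present in your sketch. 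Where you genuinely diverge is the support step, and there your route is more explicit than the paper's: Theorem~\ref{support} is justified only by a citation to \cite[Lemma~4.3]{RK-KM-2015}, which concerns the random element attached to the full torus $\Omega_1$ and hence requires an adaptation to the Haar measure of the subgroup $\Omega_{1h}^\N$; your factorization $\vf(\cdot,\omega_{1h}^\N)=\vf_{\pp_0}(\cdot,\omega_0)\,\vph(\cdot,\omega_h)$, the verification that the finitely many $\pp_0$-factors are holomorphic and zero-free on $\sigma>0$ (from $|a_m^{(j)}|\le p_m^{\beta_0}$ and $f(j,m)\ge 1$), and the observation that multiplication by such a function is a homeomorphism of $H(D_M)$ preserving $S_\varphi$, supply precisely that adaptation, reducing the support claim to the one already established for $\vph$ in \cite{RK-KM-2021} (the setting of Theorem~\ref{rk-dis-th-2021}). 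In short, your argument buys a self-contained justification of the support theorem (modulo a routine conditioning/Fubini step over $\omega_0\in G_0$ and Hurwitz's theorem for the closedness of $S_\varphi$), whereas the paper buys brevity by leaving that reduction implicit in a citation; otherwise the two proofs coincide in substance.
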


In the next sections, we will give auxiliary results with proofs, like joint mixed discrete functional limit theorem in the space of analytic functions (Section~\ref{sec-2}), and the statement on the support of a certain probability measure (Section~\ref{sec-3}). 
Finally, in Section~\ref{sec-4}, we prove Theorem~\ref{rk-dis-th-new}.
The method of the proof is inspired by the argument in \cite{AL-KM-JSt-2016}.

\section{A new mixed joint discrete limit theorem
}\label{sec-2}

Hereafter we assume that $\alpha$ is transcendental and $\exp\big\{\frac{2\pi}{h}\big\}$
is rational.

The proof of universality theorems is based on the limit theorems for  weakly convergent probability measures in the space of analytic functions. Therefore, our first task is to prove such a theorem for the couple of functions $\big(\vf(s),\zeta(s,\alpha;\gb)\big)$. In this  section, we assume that $\vf(s)\in \mathcal{M}$.

\subsection{Two key lemmas}


Let $\gamma=\{s \in \mathbb{C}: |s|=1\}$. Define two tori
$$
\Omega_1:=\prod_{p \in \pp} \gamma_p \quad
\text{and} \quad  \Omega_2:=\prod_{m=0}^{\infty}\gamma_m
$$
with $\gamma_p=\gamma$ for all  $p \in \pp$
 and $\gamma_m=\gamma$ for all $m\in \no$, respectively.

By the Tikhonov theorem (see \cite[Theorem~5.1.4]{AL-1996}), with the product topo\-lo\-gy and pointwise multiplication both tori $\Omega_1$ and $\Omega_2$ are compact topological Abelian groups. Therefore, on $(\Omega_1,{\mathcal{B}}(\Omega_1))$ and $(\Omega_2,{\mathcal{B}}(\Omega_2))$, there exist the probability Haar measures $m_{H1}$ and $m_{H2}$, respectively, which gives us the probability spaces $(\Omega_1,{\mathcal{B}}(\Omega_1),m_{H1})$ and $(\Omega_2,{\mathcal{B}}(\Omega_2),m_{H2})$.
 Denote by $\omega_1(p)$ the projection of $\omega_1\in \Omega_1$ to $\gamma_p$ for all $p \in \pp$ and by $\omega_2(m)$ the projection of $\omega_2\in \Omega_2$ to $\gamma_m$ for all $m \in \nn_0$. Taking into account the factorization of $n$ into primes, we extend the function $\omega_1(p)$ to the set $\nn$ by the formula
 $$
 \omega_1(n)=\prod_{p^\alpha \| n}\omega_1^\gamma(p),
 $$
 where $p^\gamma\| n$ means that $p^\gamma| n$ but $p^{\gamma+1} \not  | n$.

Now let us define $\Omega_{1h}^{\N}$ \footnote{This is different from our original $\Omega_{1h}$ in \cite{RK-KM-2021}.   In what follows we use the letter $\N$ to distinguish the notion in the present paper from that in \cite{RK-KM-2021}.
} as the subgroup of $\Omega_1$ generated by $\left(p^{-ih}: p \in \pp\right)$.

\begin{lemma}\label{fact-1}
	$\Omega_{1h}^{\N}=\{\omega_1 \in \Omega_1: \omega_1(a)=\omega_1(b)\}$, where $a,b$ are
	defined in \eqref{K-1}.
\end{lemma}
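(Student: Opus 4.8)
The plan is to exploit Pontryagin duality for the compact abelian group $\Omega_1$, whose character group is the restricted direct sum $\bigoplus_{p\in\pp}\zz$: every finitely supported integer tuple $\underline{k}=(k_p)_{p\in\pp}$ determines a continuous character $\chi_{\underline k}(\omega_1)=\prod_{p\in\pp}\omega_1(p)^{k_p}$, and these exhaust $\widehat{\Omega_1}$. Writing $g=(p^{-ih}:p\in\pp)\in\Omega_1$ for the generator, so that $\Omega_{1h}^{\N}$ is the closed subgroup $\overline{\{g^k:k\in\zz\}}$, I would first record that $H:=\{\omega_1\in\Omega_1:\omega_1(a)=\omega_1(b)\}$ is the kernel of the single character $\psi(\omega_1)=\omega_1(a)\omega_1(b)^{-1}$. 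By the prime factorizations in \eqref{K-1.5} this is $\psi=\chi_{\underline\alpha}$, where $\underline\alpha=(\alpha_p)_{p\in\pp}$ with $\alpha_p=0$ for $p\in\pp_h$; in particular $H$ is a closed subgroup of $\Omega_1$.

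Next comes the easy inclusion $\Omega_{1h}^{\N}\subseteq H$. Since $\omega_1$ is completely multiplicative on $\nn$, for every $k\in\zz$ one computes $g^k(a)=\exp(-ikh\log a)$ and $g^k(b)=\exp(-ikh\log b)$, whence $g^k(a)g^k(b)^{-1}=\exp\big(-ikh(\log a-\log b)\big)=\exp(-2\pi i k)=1$ by \eqref{K-2}. Thus $g^k\in H$ for all $k$, and because $H$ is closed, $\Omega_{1h}^{\N}=\overline{\{g^k:k\in\zz\}}\subseteq H$.

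The substance is the reverse inclusion, which I would obtain by comparing annihilators. As $\Omega_{1h}^{\N}$ is the closed subgroup generated by $g$, a character annihilates it iff it annihilates $g$, so $(\Omega_{1h}^{\N})^{\perp}=\{\chi_{\underline k}:\chi_{\underline k}(g)=1\}$. Now $\chi_{\underline k}(g)=\exp\big(-ih\sum_{p}k_p\log p\big)=1$ means $\sum_{p}k_p\log p\in\frac{2\pi}{h}\zz$, and since $\frac{2\pi}{h}=\sum_{p}\alpha_p\log p$ by \eqref{K-2}, this reads $\sum_{p}(k_p-n\alpha_p)\log p=0$ for some $n\in\zz$. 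At this point I would invoke the $\qq$-linear independence of $\{\log p:p\in\pp\}$ (a consequence of unique factorization) to conclude $k_p=n\alpha_p$ for every $p$, i.e.\ $\chi_{\underline k}=\psi^{\,n}$. Hence $(\Omega_{1h}^{\N})^\perp=\langle\psi\rangle=H^\perp$ (the annihilator of $\ker\psi$ being exactly the cyclic group $\langle\psi\rangle$), and since a closed subgroup of a compact abelian group is recovered as the annihilator of its annihilator, $\Omega_{1h}^{\N}=\big((\Omega_{1h}^{\N})^\perp\big)^\perp=(H^\perp)^\perp=H$.

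The main obstacle is precisely this duality step: one must correctly identify $\widehat{\Omega_1}$ and then reduce the condition $\chi_{\underline k}(g)=1$ to a $\zz$-linear relation among $\{\log p\}$, the genuinely arithmetic input being the linear independence of the logarithms of the primes over $\qq$. The rationality hypothesis $\exp(2\pi/h)=a/b$ is exactly what makes $\frac{2\pi}{h}$ an integer combination of these logarithms, so that the annihilator collapses to $\langle\psi\rangle$ rather than to something larger; note that the transcendence of $\alpha$ plays no role here, since this lemma concerns only the $\Omega_1$-component. A minor point to treat with care is the meaning of ``generated'': one takes the closed subgroup, and in the compact group $\Omega_1$ the closure of the forward orbit $\{g^k:k\geq 0\}$ already coincides with $\overline{\{g^k:k\in\zz\}}$.
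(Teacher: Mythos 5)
Your proof is correct, but note that the paper itself contains no argument for Lemma~\ref{fact-1}: its ``proof'' is a pointer to Bagchi's thesis \cite{BB-1981} and to \cite[Lemma~1]{AL-KM-JSt-2016}. Your write-up is essentially the duality argument of those cited sources, made self-contained: identify $\widehat{\Omega}_1$ with $\bigoplus_{p\in\pp}\zz$; observe that $H=\{\omega_1\in\Omega_1:\omega_1(a)=\omega_1(b)\}$ is the kernel of the single character $\psi=\chi_{\underline\alpha}$ built from the exponents in \eqref{K-1.5}; verify $g^k\in H$ for $g=\left(p^{-ih}:p\in\pp\right)$ directly from $\log a-\log b=\frac{2\pi}{h}$; and, for the reverse inclusion, show that $\chi_{\underline k}(g)=1$ forces $\sum_{p}(k_p-n\alpha_p)\log p=0$ for some $n\in\zz$, hence $\chi_{\underline k}=\psi^n$ by the $\qq$-linear independence of $\{\log p\}$, so that $(\Omega_{1h}^{\N})^\perp=\langle\psi\rangle=H^\perp$ and biduality for closed subgroups of a compact abelian group finishes the proof. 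Two points you pass over quickly are both fine but worth making explicit: the identification $H^\perp=\langle\psi\rangle$ uses that $\psi$ maps $\Omega_1$ \emph{onto} $\gamma$ (so $\Omega_1/H\cong\gamma$ and its dual is the cyclic group generated by $\psi$), which holds because $\psi$ is already surjective on any single coordinate $p\in\pp_0$ with $\alpha_p\neq 0$; and your closing remark that the closure of the forward orbit $\{g^k:k\geq 0\}$ is already a group is the standard fact that a closed sub-semigroup of a compact group is a subgroup, which is what reconciles the definition of $\Omega_{1h}^{\N}$ with the one-sided shifts $k\geq 0$ appearing in $Q_{Nh}^{\N}$. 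Your observation that the transcendence of $\alpha$ plays no role here is also accurate: it enters only later (Claim~2 of Lemma~\ref{Lemma-7-New}), not in this purely $\Omega_1$-level statement.
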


\begin{proof}
	This result originally was proved in \cite{BB-1981} (for the comments and the proof, see \cite[Lemma~1]{AL-KM-JSt-2016}).
\end{proof}
\noindent 
In what follows, the elements of the set $\Omega_{1h}^\N$ we write as $\omega_{1h}^\N$.

Let us define
$$
\Omega_h^\N:=\Omega_{1h}^\N\times \Omega_2.
$$
Since, by the construction, the torus $\Omega_{1h}^{\N}$ is a closed  subgroup of $\Omega_1$, it is a compact topological Abelian group also, and the probability Haar measure
$m_{H1h}^\N$ exists on $(\Omega_{1h}^\N,{\mathcal{B}}(\Omega_{1h}^\N))$, which leads to the probability space $(\Omega_{1h}^\N,$ ${\mathcal{B}}(\Omega_{1h}^\N), m_{H1h}^\N)$. Finally, we construct  the Haar measure $m_H^{h, \N}$ of $\Omega_h^\N$ as the product of measures $m_{H1h}^\N$ and $m_{H2}$, and the probability space $(\Omega_{h}^\N,{\mathcal{B}}(\Omega_{h}^\N),$ $ m_H^{h, \N})$. By an element $\omega_h^\N$ of $\Omega_h^\N$, we mean the tuple $(\omega_{1h}^\N, \omega_2)$ with $\omega_{1h}^\N \in \Omega_{1h}^\N$ and $\omega_2\in \Omega_2$.


Now we consider the discrete limit theorem on torus $\Omega_h^\N$. Note that it occupies the most important place in the proof of our mixed joint discrete limit theorem, and also contains one of the main novelties of the present paper.

On $\left(\Omega_h^\N,{\mathcal{B}}(\Omega_h^\N)\right)$,  define the probability measure
$$
Q_{Nh}^\N(A):=\frac{1}{N+1}\# \bigg\{0 \leq k \leq N: \bigg(\big(p^{-ikh}: p \in \pp\big), \big((m+\alpha)^{-ikh}: m \in \no\big)\bigg)\in A\bigg\}
$$
for $A \in {\mathcal B}(\Omega_{h}^\N)$.

\begin{lemma}
\label{Lemma-7-New}
	The probability measure
	$Q_{Nh}^\N$ converges weakly to the Haar measure $m_{H}^{h,\N}$ as $N \to \infty$.
\end{lemma}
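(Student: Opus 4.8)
The plan is to establish the weak convergence of $Q_{Nh}^\N$ to the Haar measure $m_H^{h,\N}$ by the standard Fourier-analytic criterion for probability measures on compact Abelian groups: it suffices to prove that the Fourier transform of $Q_{Nh}^\N$ converges, as $N\to\infty$, to the Fourier transform of $m_H^{h,\N}$, which (for the Haar measure) equals $1$ at the trivial character and $0$ at every nontrivial character. Since $\Omega_h^\N=\Omega_{1h}^\N\times\Omega_2$, the dual group is generated by characters on each factor, and a character of $\Omega_h^\N$ can be written in terms of a finite set of primes $p$ (pairing against $\omega_{1h}^\N(p)$) and a finite set of indices $m\in\no$ (pairing against $\omega_2(m)$). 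The key point is that on the \emph{subgroup} $\Omega_{1h}^\N$ the characters are not free: by Lemma~\ref{fact-1} we have the relation $\omega_1(a)=\omega_1(b)$, so a character supported on the primes is trivial on $\Omega_{1h}^\N$ precisely when the corresponding exponent vector lies in the subgroup of relations generated by $a/b=\prod_{p\in\pp_0}p^{\alpha_p}$.

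\emph{First} I would write out the Fourier transform explicitly. For a character $\chi$ determined by integer exponents $\underline{k}=(k_p)$ (finitely many nonzero, indexed by primes) and $\underline{l}=(l_m)$ (finitely many nonzero, indexed by $m\in\no$), one has
\begin{equation*}
\widehat{Q_{Nh}^\N}(\underline k,\underline l)=\frac{1}{N+1}\sum_{j=0}^{N}\exp\left\{-ijh\left(\sum_p k_p\log p+\sum_m l_m\log(m+\alpha)\right)\right\}.
\end{equation*}
This is a finite geometric sum, so it tends to $1$ if the bracketed frequency $\theta:=\sum_p k_p\log p+\sum_m l_m\log(m+\alpha)$ satisfies $h\theta\in 2\pi\zz$, and tends to $0$ otherwise (the modulus being $O(1/(N+1))$ away from a zero of the denominator). \emph{Next}, I would check that this limit matches $\widehat{m_H^{h,\N}}$. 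The condition $h\theta\in2\pi\zz$ means $\exp(i h\theta)=1$, i.e. $\prod_p p^{k_p h/(2\pi)\cdot 2\pi}\cdots$; more usefully, using $\exp(2\pi/h)=\prod_{p\in\pp_0}p^{\alpha_p}$ from \eqref{K-2}, the relation $h\theta\in2\pi\zz$ forces the $\log(m+\alpha)$ part to vanish (by transcendence of $\alpha$, the numbers $\log(m+\alpha)$ together with the $\log p$ are suitably independent, so no $l_m$ can contribute), and forces the prime part $\sum_p k_p\log p$ to be an integer multiple of $2\pi/h=\log(a/b)=\sum_{p\in\pp_0}\alpha_p\log p$. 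This last condition is exactly the statement that $\chi$ restricted to $\Omega_{1h}^\N$ is trivial, by Lemma~\ref{fact-1}; on the complementary characters $\widehat{m_H^{h,\N}}=0$, matching the geometric-sum limit.

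\emph{The main obstacle} is the arithmetic/number-theoretic step establishing exactly when $h\theta\in2\pi\zz$, and showing this coincides with triviality of $\chi$ on $\Omega_h^\N$. This requires two independence facts working together: the transcendence of $\alpha$ (which guarantees the $\log(m+\alpha)$ cannot conspire with the rational-prime data coming from $\exp(2\pi/h)\in\qq$ to produce spurious relations), and the precise description of the relation lattice on $\pp_0$ supplied by Lemma~\ref{fact-1}. The delicate case is when $\underline l\neq 0$: I must rule out that a nonzero combination $\sum_m l_m\log(m+\alpha)$ lands in $\frac{2\pi}{h}\zz+\sum_{p}k_p\log p$ for some choice of prime exponents. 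Here transcendence of $\alpha$ does the work, since any such relation would give an algebraic relation among $\alpha$-shifts and rationals incompatible with $\alpha$ being transcendental; I would invoke the linear-independence argument already implicit in the earlier cited limit theorems. Once the exponent-vector characterization is pinned down, verifying that it reproduces $\widehat{m_H^{h,\N}}$ on both factors $\Omega_{1h}^\N$ and $\Omega_2$ is routine, and weak convergence follows from the continuity theorem for characters on compact groups.
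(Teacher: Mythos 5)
Your proposal is correct and takes essentially the same route as the paper's proof: both compute the Fourier transform of $Q_{Nh}^\N$ as the geometric sum $\frac{1}{N+1}\sum_{k=0}^{N}\exp(-ikhX)$, determine exactly when $\exp(-ihX)=1$ by exponentiating the relation (so that $\exp X$ becomes a power of $a/b$), using transcendence of $\alpha$ to force all $l_m=0$ and unique factorization together with \eqref{K-2} to identify the surviving exponent vectors with the relation lattice $k_p=r\alpha_p$ on $\pp_0$ (the paper's condition \eqref{K-3}), and then conclude by the continuity theorem for probability measures on compact groups. The only cosmetic difference is that the paper parametrizes characters of the subgroup $\Omega_{1h}^\N$ with an explicit ambiguity parameter $l$ in \eqref{char-Omega-1-new}, whereas you describe the same constraint as the exponent vector lying in the relation subgroup generated by $a/b=\prod_{p\in\pp_0}p^{\alpha_p}$ via Lemma~\ref{fact-1}.
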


\begin{proof}
We use the notation given by \eqref{K-1}, \eqref{K-1.5} and \eqref{K-2}.
The characters of the group  $\Omega_{1h}^\N$,  for some $l \in \mathbb{Z}$, are of the form
\begin{align}\label{char-Omega-1-new}
	\chi(\omega_{1h}^\N)=\prod_{p \in \pp\setminus \pp_0}\omega_1^{k_p}(p)\prod_{p \in \pp_0}\omega_1^{k_p+l\alpha_p}(p)
\end{align}
	as a representation in the dual group of $\Omega_1$, where only a finite number of integers $k_p$ are non-zero (see \cite[(3.1)]{AL-KM-JSt-2016}).
	
In view of the definition of the measure $Q_{Nh}^\N$, its Fourier transform, for $(\underline{k},\underline{l})=\big((k_p: p \in \pp),$ $(l_m: m \in \no)\big)$ (here only a finite number of $k_p$ and $l_m$ are not zero), is given by
	\begin{eqnarray*}
		g_{Nh}^\N(\underline{k},\underline{l})&=&\int_{\Omega_{h}^\N}\chi(\omega_h^\N)\d Q_{Nh}^\N\cr &=&\frac{1}{N+1}\sum_{k=0}^{N}\ \prod_{p \in \pp \setminus \pp_0}p^{-ikhk_p}\prod_{p\in \pp_0}p^{-ikh(k_p+l\alpha_p)} \prod_{m \in \no}(m+\alpha)^{-ikhl_m}\cr &=& \frac{1}{N+1}\sum_{k=0}^{N}\exp(-ikhX),
	\end{eqnarray*}
	where
	$$
	X=\sum_{p \in \pp\setminus \pp_0}k_p\log p+\sum_{p \in \pp_0}(k_p+l\alpha_p)\log p
	+\sum_{m \in \nn_0}l_m\log(m+\alpha), \quad l \in \zz.
	$$

	Now we consider the condition
	\begin{equation}\label{K-3}
		\begin{cases}
			k_p=0 \quad \text{for any}\quad  p\in \pp\setminus\pp_0,\\
			l_m=0 \quad \text{for any}\quad m \in \nn_0,\\
			\text{there exists} \quad r\in \mathbb{Z} \quad \text{such that} \quad k_p=r\alpha_p  \quad \text{for} \quad \text{any} \quad p \in \pp_0.
		\end{cases}
	\end{equation}
	
{\noindent \it{Claim~1.}} If condition \eqref{K-3} 	holds, then  $g_{Nh}^\N({\underline k},{\underline l})=1$.
	
\begin{proof}
Condition \eqref{K-3} implies that
		$$
		g_{Nh}^\N({\underline k},{\underline l})=\frac{1}{N+1}\sum_{k=0}^{N}\ \prod_{p \in \pp_0}p^{-ikh(r+l)\alpha_p}.
		$$
Using \eqref{K-2} we see that
		$$
		\prod_{p \in \pp_0}p^{-ikh(r+l)\alpha_p}=\left(\prod_{p\in \pp_0}p^{\alpha_p}\right)^{-ikh(r+l)}=\left(e^{\frac{2\pi}{h}}\right)^{-ikh(r+l)}=e^{-2\pi ik(r+l)}=1,
		$$
hence the claim.
\end{proof}
	
{\noindent\it{Claim~2.}} Suppose that \eqref{K-3} does not
	hold. Then we claim that
\begin{equation}\label{K-4}
		\exp\left\{-ihX
		\right\}\not=1.
\end{equation}
	
\begin{proof}
If $\exp(-ihX)=1$, then there exists $l_0\in \mathbb{Z}$ such that $-ihX=-2\pi i l_0.$
	Therefore, $X=\frac{2\pi}{h}l_0$, and so $\exp X=\exp\left(\frac{2\pi}{h}l_0\right)=\left(\frac{a}{b}\right)^{l_0}$. That is,
\begin{equation}\label{K-5}
			\prod_{p\in \pp\setminus \pp_0}p^{k_p}\prod_{p \in \pp_0}p^{k_p+l\alpha_p}\prod_{m\in \nn_0}(m+\alpha)^{l_m}=\left(\frac{a}{b}\right)^{l_0}
\end{equation}
is rational.
If there exists $l_m\not=0$, then the equality \eqref{K-5} contradicts with the assumption that $\alpha$ is transcendental. Therefore, all $l_m=0$, and
$$
		\prod_{p\in \pp\setminus\pp_0}p^{k_p}\prod_{p\in \pp_0}p^{k_p+l\alpha_p}=\left(\frac{a}{b}\right)^{l_0}.
$$
But, by \eqref{K-2}, this right-hand side  is equal to $\left(\prod_{p\in \pp_0}p^{\alpha_p}\right)^{l_0}$, so we have
$$
		\prod_{p\in \pp\setminus\pp_0}p^{k_p}\prod_{p\in \pp_0}p^{k_p+(l-l_0)\alpha_p}=1.
$$
In view of the uniqueness of decomposition into prime divisors, this implies  that
\begin{align}
\begin{cases}
k_p=0 \quad \text{for any}\quad p \in \pp\setminus\pp_0,  \\
k_p=(l_0-l)\alpha_p \quad \text{for any} \quad p\in \pp_0.
\end{cases}
\end{align}
Therefore, putting $l_0-l=r$, we find that the condition \eqref{K-3} holds. This implies that, if \eqref{K-3} does not hold, then $\exp(-ihX)\not=1$.
	\end{proof}
	
From the Claims~1 and 2, now  we obtain
	$$
	\lim\limits_{N \to \infty} g_{Nh}^\N(\underline{k},\underline{l})=
	\begin{cases}
		1, &\text{if the assumption} \quad \eqref{K-3} \quad \text{holds},\cr
		0, &\text{if the assumption} \quad \eqref{K-3} \quad \text{does not hold}.
	\end{cases}
	$$
	Then the assertion of the lemma follows from a continuity theorem for probability measures on compact groups (see \cite{HH-1977}), since the right-hand side of the last relation is the Fourier transform of the Haar measure $m_{H}^{h,\N}$.	
\end{proof}


The next point, where the conditions of Theorem~\ref{rk-dis-th-new} 
again play an essential role, is the ergodicity of a certain transformation on the probability space $(\Omega_h^\N, {\mathcal{B}}(\Omega_h^\N),m_H^{h,\N})$.
Let
$$
f_h^\N=\left((p^{-ih}: p \in \pp), ((m+\alpha)^{-ih}): m\in \nn_0\right)\in \Omega_h^\N,
$$
and, for $\omega_h^\N \in \Omega_h^\N$, define $\Phi_h^\N: \Omega_h^\N \to \Omega_h^\N$ by
$$
\Phi_h^\N(\omega_h^\N)=f_h^\N\cdot \omega_h^\N.
$$

\begin{lemma}
\label{Lemma-10-New}
	The transformation $\Phi_{h}^\N$ is ergodic.
\end{lemma}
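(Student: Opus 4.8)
The plan is to prove ergodicity of $\Phi_h^\N$ by the standard Fourier-analytic criterion on compact abelian groups: a measure-preserving translation $\omega \mapsto f\cdot\omega$ on a compact abelian group equipped with Haar measure is ergodic if and only if, for every nontrivial character $\chi$, we have $\chi(f)\neq 1$; equivalently, every character $\chi$ with $\chi(f)=1$ must be trivial on the whole group. So first I would note that $\Phi_h^\N$ preserves the Haar measure $m_H^{h,\N}$ (translation invariance of Haar measure), and then reduce the ergodicity claim to showing that if $A\in{\mathcal B}(\Omega_h^\N)$ is $\Phi_h^\N$-invariant then $m_H^{h,\N}(A)\in\{0,1\}$. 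The clean way to do this is via Fourier expansion: expand the indicator $\mathbf 1_A$ in characters $\chi$ of $\Omega_h^\N$, use invariance $\mathbf 1_A\circ\Phi_h^\N=\mathbf 1_A$ to get $\widehat{\mathbf 1_A}(\chi)\big(\chi(f_h^\N)-1\big)=0$ for every $\chi$, and conclude that the only surviving Fourier coefficients are those attached to characters with $\chi(f_h^\N)=1$.

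The heart of the matter is therefore the arithmetic computation of $\chi(f_h^\N)$. Here I would use the explicit description of the characters of $\Omega_h^\N=\Omega_{1h}^\N\times\Omega_2$, combining the form \eqref{char-Omega-1-new} of the characters of $\Omega_{1h}^\N$ with the characters $\prod_{m\in\no}\omega_2^{l_m}(m)$ of $\Omega_2$. Evaluating such a character on $f_h^\N=\big((p^{-ih}:p\in\pp),((m+\alpha)^{-ih}:m\in\no)\big)$ gives exactly $\exp(-ihX)$ with
$$
X=\sum_{p\in\pp\setminus\pp_0}k_p\log p+\sum_{p\in\pp_0}(k_p+l\alpha_p)\log p+\sum_{m\in\nn_0}l_m\log(m+\alpha),
$$
which is precisely the quantity appearing in the proof of Lemma~\ref{Lemma-7-New}. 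This is the key observation: the ergodicity condition $\chi(f_h^\N)=1$ is the same as $\exp(-ihX)=1$, so I may invoke Claim~2 of that proof verbatim. By Claim~2, $\exp(-ihX)=1$ forces condition \eqref{K-3} to hold, i.e. $k_p=0$ for $p\in\pp\setminus\pp_0$, all $l_m=0$, and $k_p=r\alpha_p$ on $\pp_0$ for some $r\in\zz$.

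Thus I would show that any character $\chi$ with $\chi(f_h^\N)=1$ is forced into the shape dictated by \eqref{K-3}, and then I must verify that every such character is in fact the trivial character of $\Omega_h^\N$. This is where Lemma~\ref{fact-1} enters decisively: the characters satisfying \eqref{char-Omega-1-new} with $k_p=r\alpha_p$ on $\pp_0$ and $k_p=0$ elsewhere are exactly those that act trivially on the subgroup $\Omega_{1h}^\N=\{\omega_1:\omega_1(a)=\omega_1(b)\}$, because by Lemma~\ref{fact-1} the relation $\prod_{p\in\pp_0}\omega_1^{\alpha_p}(p)=\omega_1(a)/\omega_1(b)=1$ holds identically on $\Omega_{1h}^\N$. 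Together with all $l_m=0$ (trivial on $\Omega_2$), this means $\chi\equiv 1$ on $\Omega_h^\N$. Consequently only the trivial character survives, so $\mathbf 1_A$ is $m_H^{h,\N}$-almost everywhere constant, giving $m_H^{h,\N}(A)\in\{0,1\}$, which is ergodicity. The main obstacle, and the step requiring genuine care, is precisely this last identification: one must argue that the family \eqref{K-3} of characters does not merely fix $f_h^\N$ but is trivial as a character of the quotient group $\Omega_{1h}^\N$ (not of the ambient $\Omega_1$), and this is exactly what Lemma~\ref{fact-1} guarantees. The rest is the routine invariant-indicator Fourier argument.
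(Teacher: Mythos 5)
Your proposal is correct and follows essentially the same route as the paper's own proof: the explicit character form \eqref{K-6}, the reduction of $\chi(f_h^\N)=1$ to Claim~2 of Lemma~\ref{Lemma-7-New} (forcing condition \eqref{K-3}), the verification that \eqref{K-3}-characters are trivial on $\Omega_{1h}^\N$, and the standard invariant-indicator Fourier argument. If anything, your use of Lemma~\ref{fact-1} (the relation $\omega_1(a)=\omega_1(b)$ on $\Omega_{1h}^\N$) to justify the triviality step is slightly more explicit than the paper's appeal to ``the same argument as in the proof of Claim~1,'' but it is the same idea.
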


\begin{proof}
	As it was already mentioned in the proof of Lemma~\ref{Lemma-7-New},  the cha\-rac\-ters of the group $\Omega_{1h}^\N$ are defined by the formula \eqref{char-Omega-1-new}. Therefore, we see that the characters of the group $\Omega_h^\N$ are of the form
	\begin{equation}\label{K-6}
		\chi(\omega_h^\N)=\prod_{p \in \pp\setminus\pp_0}\omega_{1}^{k_p}(p)\prod_{p\in \pp_0}\omega_{1}^{k_p+l\alpha_p}(p)\prod_{m\in \nn_0}\omega_2^{l_m}(m),
	\end{equation}
	where only a finite number of integers $k_p$ and $l_{m}$ are distinct from zero.
	Therefore,
	$$
	\chi(f_{h}^\N)=\exp\bigg\{-ih\bigg(\sum_{p \in \pp\setminus \pp_0}k_p\log p+\sum_{p\in \pp_0}(k_p+l\alpha_p)\log p+
	\sum_{m \in \no}l_{m} \log(m+\alpha)\bigg)\bigg\}.
	$$
	Claim~2 in the proof of Lemma~\ref{Lemma-7-New} asserts that, if the assumption \eqref{K-3} does not hold, then $\chi (f_h^\N)\not=1$.
	

Suppose that the assumption \eqref{K-3} holds. Then, by \eqref{K-6}, $\chi(\omega_h^\N)\equiv1$
for any $\omega_h^\N\in\Omega_h^\N$, because applying the same argument as in the proof of Claim~1 we see that $\prod_{p \in \pp_0}\omega_1^{k_p+l\alpha_p}(p)=1$.   This implies that character $\chi$  should be trivial.   Therefore, if $\chi$ is a non-trivial character of $\Omega_h^\N$, then \eqref{K-3} does not hold, and hence 		
\begin{align}\label{chi-not-1}
\chi(f_h^\N)\not=1.
\end{align}

The remaining part of the proof is standard (see the proof of \cite[Lemma 3.5]{RK-KM-2021}).
Let $A$ be an invariant set of the transformation $\Phi_h^\N$. Denote by $I_A$ and ${\widehat g}$ the indicator function of $A$ and the Fourier transform of function $g$, respectively. Then we find that
$$
{\widehat I}_A=\int_{\Omega_h^\N}\chi(\omega_h^\N)I_A(\omega_h^\N)m_H^{h,\N}(d \omega_h^\N)=\chi(f_h^\N){\widehat I}_A(\chi),
$$
because the measure $m_H^{h,\N}$ is invariant and $I_A(f_h^\N \cdot\omega_h^\N)=I_A(\omega_h^\N)$ for almost all $\omega_h^\N \in \Omega_h^\N$.
This and \eqref{chi-not-1} show that
\begin{align}\label{non-triv}
{\widehat I}_A(\chi)=0
\end{align}
for non-trivial $\chi$.

Now suppose that $\chi_0$ is the trivial character of $\Omega_h^\N$ (that is $\chi_0(\omega_h^\N)=1$ for all $\omega_h^\N \in \Omega_h^\N$)  and  ${\widehat I}_A(\chi_0)=u$. Then, using the orthogonality property of characters and noting \eqref{non-triv}, we have that, for every character $\chi$ of group  $\Omega_h^\N$,
$$
{\widehat I}_A(\chi)=u\int_{\Omega_h^\N}\chi(\omega_h^\N)m_H^{h,\N}(d \omega_h^\N)={\widehat u}(\chi).
$$
Hence, we deduce that $I_A(\omega_h^\N)=0$ or $I_A(\omega_h^\N)=1$ for almost all $\omega_h^\N \in \Omega_h^\N$. From this we find that $m_H^{h,\N}(A)=0$ or $m_H^{h,\N}(A)=1$. Therefore, the transformation $\Phi_h^\N$ is ergodic.
\end{proof}

\subsection{The discrete mixed joint limit theorem}

By the condition (i), the function $\varphi(s)$ has only finitely many poles.   Denote them by $s_1(\varphi),\ldots,s_l(\varphi)$, and put
$$
D_{\varphi}:=\{s \in \cc : \;\sigma>\sigma_0,\; \sigma\neq\Re s_j(\varphi), \;j=1,\dots, l\}.
$$
Then $\vf(s)$ and its vertical shift $\vf(s+ikh)$ are holomorphic in $D_\vf$. While the functions $\zeta(s,\alpha;\gb)$ and $\zeta(s+ikh,\alpha;\gb)$ are holomorphic in
$$
D_{\zeta}:=
\begin{cases}\big\{s \in \cc: \; \sigma>\frac{1}{2}\big\} & \text{if}\quad \zeta(s,\alpha;\gb)\;\; \text{is entire},\cr
	\big\{s \in \cc :\;\sigma>\frac{1}{2},\; \sigma\neq 1\big\} & \text{if}\quad s=1\;\; \text{is a pole of}\;\; \zeta(s,\alpha;\gb)
\end{cases}
$$
(for the arguments, see \cite{RK-KM-2015}).

For $s_1\in \cc$ and $\omega_{1h}^\N\in \Omega_{1h}^\N$, define
\begin{equation*}
\varphi(s_1,\omega_{1h}^\N):=\sum_{k=1}^{\infty}\frac{c_k \omega_{1h}^\N(k)}{k^{s_1}}=
\prod_{k =1}^{\infty}\prod_{j=1}^{g(k)}\bigg(1-\frac{a_{k}^{(j)}\omega_{1h}^\N(p_k)^{f(j,k)}}{p_k^{(s_1+\alpha+\beta) f(j,k)}}\bigg)^{-1}.
\end{equation*}
Since $\Omega_{1h}^\N$ is a subgroup of $\Omega_1$, then  $\varphi(s_1,\omega_{1h}^\N)$ coincides with the restrction of $\varphi(s_1,\omega_1)$ to the  set $\Omega_{1h}^\N$ (see \cite{RK-KM-2015}). Therefore, this converges uniformly almost surely on any compact subset of $D_1$, where $D_1$ is a fixed open subset of $D_\varphi$. Hence $\varphi(s_1,\omega_{1h}^\N)$ is an $H(D_1)$-valued random element defined on $\left(\Omega_{1h}^\N,{\mathcal{B}}(\Omega_{1h}^\N),m_{H1h}^\N\right)$. While, for $s_2 \in \cc$ and $\omega_2 \in \Omega_2$, define 
$$
\zeta(s_2,\alpha,\omega_2;\gb)=\sum_{m \in \no}
\frac{b_m\omega_2(m)}{(m+\alpha)^{s_2}}.
$$
This is $H(D_2)$-valued random element defined on $(\Omega_{2},{\mathcal B}(\Omega_{2}),m_{H2})$, where $D_2$ is an open region on $D_\zeta$ (more detailed expalnation can be found in \cite{AL-2006}).
Moreover, on the probability space $(\Omega_{h}^\N,{\mathcal B}(\Omega_{h}^\N), m_{H}^{h,\N})$,  define the $\uH$-valued random element 
$\uZ(\us,\omega_h^\N)$ by the formula
$$
\uZ(\us,\omega_{h}^\N):=\big(\varphi(s_1,\omega_{1h}^\N), \zeta(s_{2},\alpha,\omega_{2};\gb)\big)
$$
(here $\uH:=H(D_1)\times H(D_2)$, $\us=(s_1,s_2)$ with $s_1\in D_1$, $s_2 \in D_2$). 
Let $P_\uZ$ be the distribution of random element $\uZ(\us,\omega_{h}^\N)$, i.e., let $P_\uZ$ be a probability measure given by
$$
P_{\uZ}(A):=m_H^{h,\N}\big\{\omega_h^\N \in \Omega_h^\N:\; \uZ(\us,\omega_h^\N)\in A\big\}, \quad A \in {\mathcal B}(\uH).
$$

In the proof of our main result -- Theorem~\ref{rk-dis-th-new} -- the functional limit theorem will be used. Therefore, in this section we show the following mixed discrete joint limit theorem in the sense of weakly convergent probability measures in the space of holomorphic functions.

\begin{theorem}
\label{theorem-6-new}
	Suppose that $\alpha$ is transcendental number, and, for $h>0$,  $\exp\big\{\frac{2\pi}{h}\big\}$ is a rational number. Then the measure
$$
P_{N}(A):=\frac{1}{N+1}\#\big\{0 \leq k \leq N: \;
\uZ(\us+ikh)
\in A\big\}, \quad A\in\mathcal{B}(\underline{H}),
$$
defined on $(\uH,{\mathcal B}(\uH))$,
converges weakly to $P_\uZ$ as $N \to \infty$.
\end{theorem}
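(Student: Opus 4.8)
The plan is to push the weak convergence on the group $\Omega_h^\N$ furnished by Lemma~\ref{Lemma-7-New} forward to the function space $\uH$ by the classical scheme: first prove the assertion for an absolutely convergent approximation, then bound the error between the genuine shifts and these approximations in mean, and finally pin down the limit measure with the ergodicity of Lemma~\ref{Lemma-10-New}. Throughout I fix a metric $\rho$ on $\uH=H(D_1)\times H(D_2)$ inducing the topology of uniform convergence on compacta (built from exhaustions of $D_1$ and $D_2$ by compact sets), so that weak convergence may be tested through $\rho$ and through bounded $\rho$-continuous functionals.

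First I would introduce the smoothed series. For a parameter $n$ put $v_n(x)=\exp\{-(x/n)^{\theta}\}$ with a fixed $\theta>0$ and define $\varphi_n(s_1)=\sum_{k\geq 1}c_k v_n(k)k^{-s_1}$ and $\zeta_n(s_2,\alpha;\gb)=\sum_{m\geq 0}b_m v_n(m+\alpha)(m+\alpha)^{-s_2}$, together with their randomized counterparts $\varphi_n(s_1,\omega_{1h}^\N)$ and $\zeta_n(s_2,\alpha,\omega_2;\gb)$; write $\uZ_n$ for the resulting $\uH$-valued object. Since these series converge absolutely on $D_1$ and $D_2$, the map $u_n\colon\Omega_h^\N\to\uH$ sending $\omega_h^\N$ to $\big(\varphi_n(\cdot,\omega_{1h}^\N),\zeta_n(\cdot,\alpha,\omega_2;\gb)\big)$ is continuous. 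Because $\omega_{1h}^\N(k)=k^{-ikh}$-type factors arise exactly at the torus point $\big((p^{-ikh}),((m+\alpha)^{-ikh})\big)$, we get the identity $\uZ_n(\us+ikh)=u_n\big((p^{-ikh}),((m+\alpha)^{-ikh})\big)$, whence the empirical measure $P_{N,n}$ of the smoothed shifts equals $Q_{Nh}^\N\circ u_n^{-1}$. Lemma~\ref{Lemma-7-New} together with the continuous mapping theorem then gives $P_{N,n}\Rightarrow \widehat P_n:=m_H^{h,\N}\circ u_n^{-1}$ as $N\to\infty$.

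Next I would establish the approximation estimate
$$
\lim_{n\to\infty}\limsup_{N\to\infty}\frac{1}{N+1}\sum_{k=0}^{N}\rho\big(\uZ(\us+ikh),\uZ_n(\us+ikh)\big)=0,
$$
treating the two coordinates separately. On a compact $K\subset D_1$ the difference $\varphi(s_1+ikh)-\varphi_n(s_1+ikh)$ is represented by a Mellin--Barnes contour integral whose size is governed by $|\varphi|$ along a vertical line with abscissa slightly exceeding $\sigma_0$; averaging over $k$ and invoking~\eqref{rk-eq-2-5}, after converting that continuous second moment into a discrete one by a Gallagher-type lemma (which also calls for a companion bound for $\varphi'$, obtained from (ii) and (iii) via Cauchy's estimate), drives the averaged $\varphi$-error to $0$ with $n$; the $\zeta$-coordinate is identical, using the known discrete second-moment bound for the periodic Hurwitz zeta-function. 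Coupling this with the previous step through the standard approximation lemma for weakly convergent measures shows that $P_N$ and $\widehat P_n$ share a weak limit, so $P_N\Rightarrow P$ and $\widehat P_n\Rightarrow P$ for one and the same probability measure $P$.

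It remains to identify $P$ with $P_\uZ$, and here Lemma~\ref{Lemma-10-New} is decisive: writing the $k$-th shift as the iterate of $\Phi_h^\N$, namely $\uZ(\us,(\Phi_h^\N)^k\omega_h^\N)=\uZ(\us+ikh,\omega_h^\N)$, the Birkhoff--Khintchine ergodic theorem yields, for every bounded $\rho$-continuous $G$ on $\uH$ and almost all $\omega_h^\N$,
$$
\lim_{N\to\infty}\frac{1}{N+1}\sum_{k=0}^{N}G\big(\uZ(\us+ikh,\omega_h^\N)\big)=\int_{\uH}G\,dP_{\uZ}.
$$
Rerunning the first three steps with a shifted starting point $\omega_h^\N$ in place of the identity (the Fourier computation of Lemma~\ref{Lemma-7-New} is insensitive to the base point, and the mean-value bound holds for a.a.\ $\omega_h^\N$ by the ergodic theorem) shows that the same $P$ is the weak limit of the randomized empirical measures for a.a.\ $\omega_h^\N$; comparison with the ergodic identity forces $P=P_\uZ$, completing the proof. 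I expect the main obstacle to lie in the approximation step for the $\varphi$-coordinate: the hypotheses supply the continuous mean-value bound~\eqref{rk-eq-2-5} only on the single line $\sigma=\sigma_0$, so passing to a uniform-on-$K$ discrete average needs both the Gallagher discretization and careful treatment of the finitely many poles admitted by (i) — in particular the Euler factors over the primes of $\pp_0$ that separate $\varphi$ from the partial function $\vph$ of~\cite{RK-KM-2021}.
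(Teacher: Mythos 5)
Your proposal is correct and follows essentially the same route as the paper: pushing the torus limit theorem (Lemma~\ref{Lemma-7-New}) forward through the continuous map $u_n$ for the mollified absolutely convergent series, the mean approximation estimate between $\uZ$ and $\uZ_n$, Billingsley's approximation theorem to extract a common weak limit $P$ of $P_N$ and $\widehat P_N$, and finally the ergodicity of $\Phi_h^\N$ (Lemma~\ref{Lemma-10-New}) combined with the Birkhoff--Khintchine theorem to identify $P=P_\uZ$. The only difference is one of detail: you sketch a direct proof of the approximation step (Mellin--Barnes representation, the mean-value bound \eqref{rk-eq-2-5}, and Gallagher-type discretization), whereas the paper obtains it as a special case of Lemma~3 of \cite{RK-KM-2017-Pal}, which is proved by exactly that machinery.
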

\noindent Here by $\uZ(\us+ikh)$ we mean
$$
\uZ(\us+ikh):=\big(\varphi(s_1+ikh),\zeta(s_2+ikh,\alpha;\gb)\big)
$$
for $s_1 \in D_1$, $s_2 \in D_2$ and $\us+ikh:=(s_1+ikh,s_2+ikh)$.


Before the proof of Theorem~\ref{theorem-6-new}, we state other intermediate results as lemmas with remarks to their proofs.   (Then the differences from our result of \cite{RK-KM-2021} will become clearer.)


First we show mixed joint discrete limit theorems for absolutely convergent series.
Let $\sigma^*_1>\frac{1}{2}$ be fixed, and put
$$
v_1(m,n)=\exp\bigg\{-\bigg(\frac{m}{n}\bigg)^{\sigma^*_1}\bigg\} \quad \text{for} \quad m,n \in \nn,
$$
and
$$
v_2(m,n,\alpha)=\exp\bigg\{-\bigg(\frac{m+\alpha}{n+\alpha}\bigg)^{\sigma^*_1}\bigg\} \quad \text{for} \quad m \in \no, \quad n \in \nn.
$$
For $n\in\nn$, define the functions
\begin{eqnarray*}
	\varphi_{n}(s):=\sum_{k=1}^{\infty}\frac{c_k v_1(k,n)}{k^{s}} \quad \text{and} \quad
	\zeta_n(s,\alpha;{\gb}):=
	\sum_{m =0}^{\infty}\frac{b_{m} v_2(m,n,\alpha)}{(m+\alpha)^{s}},
\end{eqnarray*}
and, for brevity, put 
$$
\underline{Z}_{n}(\us):=
\big(\vf_{n}(s_1),\zeta_n(s_{2},\alpha;\gb)\big).
$$
Next, for a fixed $\widehat{\omega}_{h}^\N=(\widehat{\omega}_{1h}^\N,\ho_{2})\in \Omega_{h}^\N$, let 
\begin{eqnarray*}
	\varphi_{n}(s,\ho_{1h}^\N):=
	\sum_{k =1}^{\infty}\frac{c_k\ho_{1h}^\N(k) v_1(k,n)}{k^{s}} \quad \text{and}	
	\quad
	\zeta_n(s,\alpha, {\widehat{\omega}}_{2};\gb):=
	\sum_{m=0}^{\infty}\frac{b_{m} {\widehat{\omega}}_{2}(m) v_2(m,n,\alpha)}{(m+\alpha)^{s}},
	\end{eqnarray*}
and, for brevity, put 
$$
\underline{Z}_{n}(\us,\widehat{\omega}_{h}^\N):=
\big(\vf_{n}(s,\ho_{1h}^\N),
\zeta_n(s,\alpha,\ho_{2};\gb)\big).
$$
Then it is known that all series given above are absolutely convergent 
in the region when the real parts of the complex variable is larger than $\frac{1}{2}$ (see \cite{RK-KM-2015}). 

Now we consider the weak convergence of measures, for $A \in {\mathcal B}(\underline H)$, defined by
$$
P_{N,n}(A):=\frac{1}{N+1}\# \bigg\{0 \leq k \leq N: \uZ_{n}(\us+ikh)\in A\bigg\}
$$
and
$$
{\widehat P}_{N,n}(A):=\frac{1}{N+1}\# \bigg\{0 \leq k \leq N: \uZ_{n}(\us+ikh, \ho_h^\N)\in A\bigg\}.
$$

\begin{lemma}
\label{Lemma-8-New}
	For all $n \in \nn$, $P_{N,n}$ and ${\widehat P}_{N,n}$ both converge weakly to a certain probability measure (say $P_n$) on
	$(\uH,\mathcal{B}(\uH))$ as $N\to\infty$.
\end{lemma}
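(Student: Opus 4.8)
The plan is to exhibit both $P_{N,n}$ and $\widehat P_{N,n}$ as push-forwards of the single measure $Q_{Nh}^\N$ under continuous maps, and then to invoke Lemma~\ref{Lemma-7-New} together with the continuous mapping theorem. First I would define a map $u_n\colon\Omega_h^\N\to\uH$ by
$$
u_n(\omega_h^\N)=\big(\vf_n(s_1,\omega_{1h}^\N),\,\zeta_n(s_2,\alpha,\omega_2;\gb)\big),
\qquad \omega_h^\N=(\omega_{1h}^\N,\omega_2).
$$
Because the weights $v_1(k,n)$ and $v_2(m,n,\alpha)$ decay rapidly for fixed $n$, the series defining $\vf_n$ and $\zeta_n$ converge absolutely and uniformly on compact subsets of $D_1$ and $D_2$; since each summand is continuous in $\omega_h^\N$ and every coordinate has modulus $1$, the convergence is uniform in $\omega_h^\N$ as well, and hence $u_n$ is continuous from the product topology of $\Omega_h^\N$ into the topology of uniform convergence on compacta of $\uH$.

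The crucial observation is an orbit identity for the discrete shifts. For $0\le k\le N$ set
$$
\omega_{h,k}^\N:=\big((p^{-ikh}:p\in\pp),\,((m+\alpha)^{-ikh}:m\in\no)\big),
$$
which lies in $\Omega_h^\N$ since $(p^{-ikh})=(p^{-ih})^k$ belongs to the subgroup $\Omega_{1h}^\N$ generated by $(p^{-ih})$. Using the complete multiplicativity of the extension $\omega_1(\nu)=\prod_{p^\gamma\|\nu}\omega_1(p)^\gamma$, one checks that the first component of $\omega_{h,k}^\N$ sends $\nu\mapsto\nu^{-ikh}$, so that $\vf_n(s_1,\omega_{1h,k}^\N)=\vf_n(s_1+ikh)$ and likewise $\zeta_n(s_2,\alpha,\omega_{2,k};\gb)=\zeta_n(s_2+ikh,\alpha;\gb)$. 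Hence $\uZ_n(\us+ikh)=u_n(\omega_{h,k}^\N)$, which means exactly that $P_{N,n}=Q_{Nh}^\N\circ u_n^{-1}$. By Lemma~\ref{Lemma-7-New} the measure $Q_{Nh}^\N$ converges weakly to $m_H^{h,\N}$, so the continuous mapping theorem yields weak convergence of $P_{N,n}$ to $P_n:=m_H^{h,\N}\circ u_n^{-1}$.

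For $\widehat P_{N,n}$ the same multiplicativity gives $\vf_n(s_1+ikh,\ho_{1h}^\N)=\vf_n(s_1,\ho_{1h}^\N\cdot\omega_{1h,k}^\N)$ and analogously for $\zeta_n$, whence $\uZ_n(\us+ikh,\ho_h^\N)=u_n(\ho_h^\N\cdot\omega_{h,k}^\N)$. Thus $\widehat P_{N,n}=Q_{Nh}^\N\circ(u_n\circ\tau)^{-1}$, where $\tau(\omega_h^\N)=\ho_h^\N\cdot\omega_h^\N$ is the continuous translation by the fixed element $\ho_h^\N$. Applying the continuous mapping theorem again gives weak convergence of $\widehat P_{N,n}$ to $m_H^{h,\N}\circ(u_n\circ\tau)^{-1}$, and the translation invariance of the Haar measure, $m_H^{h,\N}\circ\tau^{-1}=m_H^{h,\N}$, forces this limit to coincide with $P_n$. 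Therefore both sequences converge weakly to the common limit $P_n$.

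I expect the main obstacle to be the careful verification that $u_n$ is genuinely continuous into $\uH$ in the correct topologies -- equivalently, that the $\omega_h^\N$-dependence of the absolutely convergent series is uniform on compacta -- together with the bookkeeping that confirms the discrete points $\omega_{h,k}^\N$ do land in the subgroup $\Omega_{1h}^\N$. Once these are in place, the remainder is the standard push-forward and Haar-invariance machinery.
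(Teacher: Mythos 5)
Your proposal is correct and is essentially the paper's own argument: the paper defines the very same map $u_n$ and then cites Lemma~\ref{Lemma-7-New}, the invariance of the Haar measure, and Billingsley's Theorem~5.1 (the continuous mapping theorem) --- precisely the three ingredients you deploy. Your writeup merely makes explicit the details the paper leaves implicit, namely the push-forward identities $P_{N,n}=Q_{Nh}^\N\circ u_n^{-1}$ and ${\widehat P}_{N,n}=Q_{Nh}^\N\circ(u_n\circ\tau)^{-1}$ and the continuity of $u_n$.
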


\begin{proof}
The proof goes in an analogous way as \cite[Lemma~3.2]{RK-KM-2015}. Here we consider the function $u_n: \Omega_h^\N \to H(D_1)\times H(D_2)$ defined by formula
$$
u_n(\omega_{h}^{\N})=\left(\sum_{k=1}^{\infty}\frac{c_k\omega_{1h}^\N(k)v_1(k,n)}{k^{s_1}},\sum_{m=0}^{\infty}\frac{b_m\omega_2(m)v_2(m,n,\alpha)}{(m+\alpha)^{s_2}}\right).
$$
Then, using Lemma~\ref{Lemma-7-New}, the fact on the invariance of the Haar measure, and \cite[Theorem~5.1]{PB-1968},
we find that, on $(\uH,{\mathcal B}(\uH))$, there exists a probability measure (say $P_n$) such that the measures $P_{N,n}$ and ${\widehat P}_{N,n}$ both converge to $P_n$ as $N \to \infty$.
\end{proof}

The next task is to pass from $\uZ_n(\us)$ to $\uZ(\us)$ and from $\uZ_n(\us,\omega_h^\N)$ to $\uZ(\us,\omega_H^\N)$, respectively. To solve this problem, we introduce a metric on $\uH$ and apply Lemma~\ref{Lemma-8-New}.

For ${\underline f}=(f_1,f_2)$ and ${\underline g}=(g_1,g_2)$, both belonging to $H(D_1)\times H(D_2)$, define
$$
{\underline \varrho}({\underline f}, {\underline g})=\max\{\varrho_{D_1}(f_1,g_1)\varrho_{D_2}(f_2,g_2)\}
$$
with a standard metric $\varrho(G)$ given on the space 
$H(G)$ for $\varrho(D_1):=\varrho_{D_1}$ and $\varrho(D_2):=\varrho_{D_2}$ (for the details, see \cite{RK-KM-2021}). This is a metric ${\underline \varrho}({\underline f}, {\underline g})$ on the space $H(D_1)\times H(D_2)$ which induces the topology of uniform convergence on compacta.

\begin{lemma}
\label{Lemma-9-New}
	We have	that
	\begin{equation*}
		\lim_{n\to\infty}\limsup_{N\to\infty}\frac{1}{N+1}\sum_{k=0}^{N}
		{\underline \varrho}\big(\uZ(\us+i kh),
		\uZ_{n}(\us+ikh)\big)=0
	\end{equation*}
	and, for almost all $\omega_{h}^\N\in\Omega_{h}^\N$,
	\begin{equation*}
		\lim_{n\to\infty}\limsup_{N\to\infty}\frac{1}{N+1}\sum_{k=0}^{N}
		{\underline \varrho}\big(\uZ(\us+ikh,\omega_h^\N),
		\uZ_{n}(\us+ikh,\omega_{h}^\N)\big)=0.
	\end{equation*}
\end{lemma}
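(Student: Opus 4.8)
The plan is to treat the two components of $\underline\varrho$ separately. Since $\underline\varrho$ is the maximum of the component metrics on $H(D_1)$ and $H(D_2)$, we have $\underline\varrho(\underline f,\underline g)\le \varrho_{D_1}(f_1,g_1)+\varrho_{D_2}(f_2,g_2)$, so it is enough to prove the two one–dimensional analogues for $(\varphi,\varphi_n)$ on $D_1$ and for $(\zeta(\cdot,\alpha;\gb),\zeta_n(\cdot,\alpha;\gb))$ on $D_2$, together with their $\omega$-twisted versions. Recalling that $\varrho_{D_1}$ is built from suprema over an exhausting sequence of compact sets with summable weights, a routine reduction shows that it suffices to establish, for every compact $K\subset D_1$,
\[
\lim_{n\to\infty}\limsup_{N\to\infty}\frac1{N+1}\sum_{k=0}^N \sup_{s\in K}\bigl|\varphi(s+ikh)-\varphi_n(s+ikh)\bigr|=0,
\]
and likewise for $\zeta$ on $D_2$.

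Next I would introduce the Mellin representation of the smoothing factor. Since $v_1(k,n)=\exp\{-(k/n)^{\sigma^*_1}\}$ is the Mellin transform of the exponential, one obtains
\[
\varphi_n(s)=\frac{1}{2\pi i}\int_{(\theta)}\varphi(s+z)\,l_n(z)\,\d z,\qquad l_n(z):=\frac{1}{\sigma^*_1}\,\Gamma\!\Bigl(\frac{z}{\sigma^*_1}\Bigr)\,n^{z},
\]
with $\theta$ so large that $\Re(s+z)>1$. Fix $\sigma_2$ with $\sigma^*<\sigma_2<\inf_{s\in K}\Re s$ and move the contour to the line $\Re z=\sigma_2-\sigma$. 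Crossing the simple pole of $l_n$ at $z=0$ (residue $1$, returning $\varphi(s)$) and, by condition (b), the pole of $\varphi(s+z)$ at $z=1-s$ (residue $r_\varphi\,l_n(1-s)$, where $r_\varphi$ is the residue of $\varphi$ at $1$), gives, with $s=\sigma+it$,
\[
\varphi_n(s)-\varphi(s)=r_\varphi\, l_n(1-s)+\frac{1}{2\pi i}\int_{-\infty}^{\infty}\varphi(\sigma_2+it+iv)\,l_n(\sigma_2-\sigma+iv)\,\d v.
\]

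I would then estimate the two terms after replacing $s$ by $s+ikh$ and averaging over $k$. For the residue term, $|l_n(1-s-ikh)|=\frac{1}{\sigma^*_1}n^{1-\sigma}\,|\Gamma((1-\sigma-i(t+kh))/\sigma^*_1)|$; for fixed $n$ the factor $n^{1-\sigma}$ is bounded on $K$, while the exponential decay of $\Gamma$ in the imaginary direction makes $\sum_{k\ge0}\sup_{s\in K}|\Gamma(\cdots)|$ convergent, so its Cesàro average, hence $\limsup_N$ of the averaged residue term, equals $0$ for every $n$. For the line integral, bounding the supremum over $K$ by Cauchy's formula and interchanging summation and integration reduces matters to the factor $n^{\sigma_2-\sigma}$ (which tends to $0$ as $n\to\infty$ since $\sigma_2<\inf_K\Re s$), the absolutely convergent integral $\int|\Gamma((\sigma_2-\sigma+iv)/\sigma^*_1)|\,\d v$, and the discrete mean value $\frac1{N+1}\sum_{k=0}^N|\varphi(\sigma_2+i\tau+ikh)|=O(1)$, uniformly for $\tau$ in a bounded set. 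Combining, the inner $\limsup_N$ is $O(n^{\sigma_2-\sigma})$, which vanishes as $n\to\infty$; the $\zeta$-component is handled identically, using the classical mean square of the Hurwitz zeta-function and the residue of $\zeta(s,\alpha;\gb)$ at $s=1$.

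The main obstacle is the uniform discrete second–moment bound feeding this estimate. In the un-twisted case it follows from a Gallagher-type inequality converting the sum along the progression $\{kh\}$ into the continuous mean square, together with the growth bound (c) and the mean-value property (iii) defining $\sigma^*$. For the twisted functions $\varphi(s,\omega^{\N}_{1h})$ and $\zeta(s,\alpha,\omega_2;\gb)$ the analogous representation holds almost surely (with the pole term absent, the twisted series being holomorphic past $\sigma=1$), and the required bound holds for almost all $\omega_h^{\N}$; here I would invoke the ergodicity of $\Phi_h^{\N}$ established in Lemma~\ref{Lemma-10-New} together with the Birkhoff–Khinchin ergodic theorem, noting that $\varphi(s+ikh,\omega)$ is the orbit average of the corresponding random element under $\Phi_h^{\N}$. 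This is exactly where the arithmetic hypothesis $\exp\{2\pi/h\}\in\qq$ and the transcendence of $\alpha$ enter.
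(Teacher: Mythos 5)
Your proposal is correct in substance, but it relates to the paper's text in an unusual way: the paper's entire proof of this lemma is a one-line citation (``since $\Omega_h^\N\subset\Omega_1\times\Omega_2$, this is a special case of \cite[Lemma~3]{RK-KM-2017-Pal}''), whereas you reconstruct from scratch essentially the argument that lies behind that citation --- the Mellin representation of the smoothing factor $v_1$, the contour shift past the poles, exponential decay of the $\Gamma$-kernel for the residue terms, Gallagher-type discrete mean-square bounds from conditions (ii)--(iii) for the line integral, and the ergodic theorem for the twisted part. Your explicit appeal to Lemma~\ref{Lemma-10-New} plus Birkhoff--Khinchine for the almost-sure statement is in fact more careful than the paper's literal justification: the subgroup $\Omega_h^\N$ is a null set for $m_{H1}\times m_{H2}$, so an ``almost all $\omega$'' statement on the full torus does not formally restrict to an ``almost all $\omega_h^\N$'' statement for $m_H^{h,\N}$, and when $\exp\{2\pi/h\}\in\qq$ the shift is not even ergodic on the full torus, so the 2017 proof cannot be imported verbatim; your route is exactly the repair the authors themselves indicate in the predecessor paper \cite{RK-KM-2021}, where the corresponding lemma is proved ``in view of'' the ergodicity lemma. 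Two small inaccuracies to fix: in Section~\ref{sec-2} the standing hypothesis is $\vf\in\mathcal{M}$, so $\vf$ may have finitely many poles and the residue term is a finite sum over the $s_j(\vf)$ (each handled as you handle $z=1-s$), not a single pole at $s=1$ as in the Steuding class; and the discrete mean value $\frac{1}{N+1}\sum_{k\le N}|\vf(\sigma_2+i\tau+ikh)|$ must be controlled with at most polynomial growth in $|\tau|$ for \emph{all} real $\tau$ (which Gallagher's lemma does provide), not merely ``uniformly for $\tau$ in a bounded set'', since $\tau=t+v$ ranges over all of $\rr$ in the line integral and only the exponential decay of the $\Gamma$-factor compensates.
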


\begin{proof}
	Since $\Omega_h^\N\subset \Omega_1\times \Omega_2$, this lemma is a special case of \cite[Lemma~3]{RK-KM-2017-Pal}.
\end{proof}

On $(\uH,{\mathcal B}(\uH))$, for $A\in\mathcal{B}(\underline{H})$ and $\omega_h^\N \in \Omega_h^\N$, we define one more probability measure 
$$
{\widehat P}_{N}(A):=\frac{1}{N+1}\#\big\{0 \leq k \leq N: \;
\uZ(\us+ikh,\omega_h^\N)
\in A\big\}.
$$

\begin{lemma}
\label{Lemma-11-New}
	Let the conditions of Theorem~\ref{theorem-6-new} be satisfied. Then $P_N$ and ${\widehat P}_N$ both converge weakly to a certain probability measure (say $P$) on $(\uH, {\mathcal{B}}(\uH))$.
\end{lemma}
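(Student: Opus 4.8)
The plan is to prove the weak convergence of $P_N$ and $\widehat{P}_N$ simultaneously by the standard three-step approximation scheme, using the measures $P_{N,n}$, $\widehat{P}_{N,n}$ and their common limit $P_n$ from Lemma~\ref{Lemma-8-New} as intermediaries. First I would fix a probability space $(\widehat{\Omega},\mathcal{B}(\widehat{\Omega}),\widehat{m})$ carrying a random variable $\theta_N$ uniformly distributed on $\{0,1,\dots,N\}$, and form the $\uH$-valued random elements $X_{N,n}=\uZ_n(\us+i\theta_N h)$ and $X_N=\uZ(\us+i\theta_N h)$. By Lemma~\ref{Lemma-8-New}, $X_{N,n}\xrightarrow{\mathcal{D}} P_n$ as $N\to\infty$ for each fixed $n$. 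The next step is to verify that the family $\{P_n\}_n$ is tight and hence relatively compact, so that along a subsequence $P_n$ converges weakly to some probability measure $P$ on $(\uH,\mathcal{B}(\uH))$; tightness follows in the usual way from the mean-square estimates for $\vf_n$ and $\zeta_n$ together with the absolute convergence recorded before Lemma~\ref{Lemma-8-New}.

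The decisive ingredient is Lemma~\ref{Lemma-9-New}, which controls the approximation error uniformly in $N$. Concretely, for every $\delta>0$ the first assertion of Lemma~\ref{Lemma-9-New} gives
$$
\lim_{n\to\infty}\limsup_{N\to\infty}\widehat{m}\big\{\underline{\varrho}(X_N,X_{N,n})\geq\delta\big\}=0,
$$
by Markov's inequality applied to the averaged metric. Combined with $X_{N,n}\xrightarrow{\mathcal{D}} P_n$ and $P_n\xrightarrow{\mathcal{D}} P$, the standard approximation theorem for weak convergence (Billingsley's Theorem~4.2, the same tool cited as \cite{PB-1968} in the proof of Lemma~\ref{Lemma-8-New}) yields $X_N\xrightarrow{\mathcal{D}} P$, that is, $P_N$ converges weakly to $P$. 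In particular the subsequential limit $P$ does not depend on the chosen subsequence, so the full sequence $P_n$ converges to $P$ and the argument is complete for $P_N$.

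For $\widehat{P}_N$ I would run the identical argument, now starting from the almost-sure statement: by Lemma~\ref{Lemma-8-New} the perturbed measures $\widehat{P}_{N,n}$ converge to the \emph{same} limit $P_n$, and by the second assertion of Lemma~\ref{Lemma-9-New} the approximation $\uZ_n(\us+ikh,\omega_h^\N)\to\uZ(\us+ikh,\omega_h^\N)$ holds for almost all $\omega_h^\N\in\Omega_h^\N$. Fixing such a generic $\omega_h^\N$, the same three-step scheme shows that $\widehat{P}_N$ converges weakly to $P$, the limit being forced to coincide with that of $P_N$ precisely because $P_{N,n}$ and $\widehat{P}_{N,n}$ share the limit $P_n$. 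The main obstacle here is not the abstract convergence machinery, which is routine, but ensuring that the exceptional $m_H^{h,\N}$-null set of bad $\omega_h^\N$ can be fixed independently of $n$; this is exactly what the ``for almost all $\omega_h^\N$'' formulation of Lemma~\ref{Lemma-9-New} guarantees, since one may intersect over the countably many $n$. The ergodicity established in Lemma~\ref{Lemma-10-New} will not be needed at this stage, but is the reason the common limit $P$ will later be identifiable with $P_{\uZ}$.
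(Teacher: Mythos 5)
Your proposal is correct and takes essentially the same route as the paper: the paper's own proof of Lemma~\ref{Lemma-11-New} is just a two-line appeal to Lemmas~\ref{Lemma-8-New} and \ref{Lemma-9-New} together with Billingsley's Theorem~4.2 \cite{PB-1968}, deferring details to the proof of Lemma~5 in \cite{EB-AL-2015rj}, and your argument (uniform random shift $\theta_N$, tightness and relative compactness of $\{P_n\}$, Markov's inequality turning Lemma~\ref{Lemma-9-New} into the hypothesis of Theorem~4.2, uniqueness of the subsequential limit, and the almost-sure variant for ${\widehat P}_N$) is exactly the standard fleshed-out form of that cited argument. In particular, your closing observations that the null set of bad $\omega_h^\N$ is handled by the quantifier in Lemma~\ref{Lemma-9-New} and that ergodicity (Lemma~\ref{Lemma-10-New}) is only needed later to identify $P$ with $P_\uZ$ match the paper's structure.
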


\begin{proof}
From Lemmas~\ref{Lemma-8-New} and \ref{Lemma-9-New} together with \cite[Theorem~4.2]{PB-1968} it follows that the measures $P_N$ and ${\widehat P}_N$ both converge weakly to same probability measure $P$ as $N \to \infty$. Note, that this proof is similar to the proof of \cite[Lemma~5]{EB-AL-2015rj}.
\end{proof}

\begin{proof}[Proof of Theorem~\ref{theorem-6-new}.]
	In view of Lemma~\ref{Lemma-11-New}, the only remaining task is to show that $P=P_\uZ$. This we obtain using  Lemma~\ref{Lemma-10-New} together with the classical Birkhoff-Khintchine ergodicity theorem. For the details, we refer to \cite{AL-1996} or \cite{JSt-2007}.
\end{proof}


\section{The support of $P_\uZ$}\label{sec-3}

For the proof of Theorem~\ref{rk-dis-th-new}, we need to have an explicit form of the support for measure $P_\uZ$. To get it, the so-called positive density method is used (for the details, see \cite{AL-KM-2001}). Moreover, here it is necessary to assume that the function $\varphi(s)$ is included in the Steuding class $\st$  (for the comments, see \cite[Remark 4.4]{RK-KM-2015}). 

Suppose that $\varphi(s)$, $K_1$, $K_2$, $f_1(s)$ and $f_2(s)$ are
as in the statement of Theorem \ref{rk-dis-th-new}.
Then there exist a real number $\sigma_0$ such that $\sigma^*<\sigma_0<1$ and a positive number $M>0$ such that $K_1$ is included in the open rectangle
$$
D_M=\{s \in \cc:\ \sigma_0<\sigma<1, \ |t|<M\},
$$
which is an open subset of $D_{\varphi}$. 
Also we can find $T>0$ such that $K_2$ is included in the open rectangle
$$
D_T=\bigg\{s\in \cc:\ \frac{1}{2}<\sigma<1, \ |t|<T\bigg\}.
$$
Hence, in Theorem~\ref{theorem-6-new} we can take  $D_1=D_M$ and $D_2=D_T$, to get the support of measure $P_\uZ$.

Let $S_\varphi:=\big\{f \in H(D_M): \ f(s) \not =0 \ \text{for all} \ s \in D_M, \ \text{or}\ f(s)\equiv 0 \big\}$. 

\begin{theorem}\label{support}
The support of the measure $P_\uZ$ is the set $S:=S_\varphi \times H(D_T)$.
\end{theorem}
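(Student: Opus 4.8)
The plan is to exploit that the Haar measure $m_H^{h,\N}$ on $\Omega_h^\N=\Omega_{1h}^\N\times\Omega_2$ is by construction the product $m_{H1h}^\N\times m_{H2}$, and that the first coordinate $\vf(s_1,\omega_{1h}^\N)$ of $\uZ$ depends only on $\omega_{1h}^\N$ while the second coordinate $\zeta(s_2,\alpha,\omega_2;\gb)$ depends only on $\omega_2$. Hence the two $\uH$-components are independent, $P_\uZ$ factors as the product of the law $P_\vf$ of $\vf(\cdot,\omega_{1h}^\N)$ on $H(D_M)$ and the law $P_\zeta$ of $\zeta(\cdot,\alpha,\omega_2;\gb)$ on $H(D_T)$, and since the support of a product of probability measures on separable metric spaces is the product of the supports, it is enough to prove that the support of $P_\vf$ equals $S_\vf$ and that the support of $P_\zeta$ is the whole of $H(D_T)$.

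For the periodic Hurwitz part I would use that, $\alpha$ being transcendental, the numbers $\{\log(m+\alpha):m\in\no\}$ are linearly independent over $\qq$ (a nontrivial rational relation would force a polynomial equation for $\alpha$). Together with the fact that $\Omega_2$ carries no constraint and that $\zeta(s,\alpha;\gb)$ has no Euler product, the standard value-distribution argument (as in \cite{AL-2006, RK-KM-2015}) shows that $\zeta(s_2,\alpha,\omega_2;\gb)$ takes values dense in $H(D_T)$, so the support of $P_\zeta$ is all of $H(D_T)$.

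For the $\vf$ part the tool is the positive density method. Taking logarithms of the Euler product reduces the task to showing that $\log\vf(s_1,\omega_{1h}^\N)$, whose principal part is $\sum_p a(p)\omega_1(p)p^{-s_1}$, ranges densely over $H(D_M)$; condition (e) of the class $\st$ supplies a set of primes of positive density on which $|a(p)|$ is bounded below, and a denseness lemma for series of independent $H(D_M)$-valued random elements, combined with Mergelyan's theorem applied to $\log f_1$ for $f_1\in H_0^c(K_1)$ (cf.\ \cite{AL-KM-2001, JSt-2007}), then lets one match any non-vanishing target. The degenerate value $f\equiv0$ is reached from the tail of the product, and these two possibilities yield exactly $S_\vf$.

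The crucial difference from \cite{RK-KM-2021}, and the point I expect to be the main obstacle, is that here $\vf$ carries Euler factors at every prime, including those in the finite set $\pp_0$, whereas $\Omega_{1h}^\N$ constrains precisely these coordinates through the single relation $\prod_{p\in\pp_0}\omega_1(p)^{\alpha_p}=1$ of Lemma~\ref{fact-1}. I would handle this by observing that $\pp_0$ is finite, so its Euler factors contribute only a fixed bounded factor and cannot affect denseness, while the free coordinates $\{\omega_1(p):p\in\pp_h\}$ remain Haar-distributed and independent. Since $\pp_h=\pp\setminus\pp_0$ omits only finitely many primes it still has density one, so the positive-density hypothesis survives on $\pp_h$ and the denseness lemma applies using the free primes alone. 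Verifying rigorously that fixing the constrained $\pp_0$-coordinates does not obstruct the approximation, so that the support over the subgroup $\Omega_{1h}^\N$ is not smaller than over the full torus $\Omega_1$, is the delicate step; once it is settled the support is the same $S_\vf$ as in the unrestricted case.
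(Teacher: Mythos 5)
Your proposal is correct, but it is not what the paper itself does: the paper's entire proof of Theorem~\ref{support} is the single line ``see \cite[Lemma~4.3]{RK-KM-2015}'', i.e.\ an appeal to the continuous-case support lemma, which is proved for the random element attached to the \emph{full} torus $\Omega_1\times\Omega_2$. Your core machinery --- factoring $P_\uZ$ into its two marginals (product Haar measure, each coordinate of $\uZ$ depending on a separate variable, support of a product measure equals the product of supports), denseness of the periodic Hurwitz part in $H(D_T)$ via the linear independence of $\{\log(m+\alpha):m\in\no\}$ forced by the transcendence of $\alpha$, and the positive density method with Mergelyan for the Euler-product part --- is exactly the machinery behind that cited lemma, so in spirit the two routes agree. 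What you add, and what the bare citation glosses over, is the reduction from the subgroup to the full torus: here $P_\uZ$ is the law over $\Omega_h^\N=\Omega_{1h}^\N\times\Omega_2$, and $\Omega_{1h}^\N$ ties together the coordinates at the finitely many primes of $\pp_0$ through the relation $\prod_{p\in\pp_0}\omega_1(p)^{\alpha_p}=1$ of Lemma~\ref{fact-1}. Your decomposition of $\Omega_{1h}^\N$ as the direct product of the subgroup $K_0\subset\prod_{p\in\pp_0}\gamma_p$ cut out by this relation with the unconstrained torus $\prod_{p\in\pp_h}\gamma_p$ (so the $\pp_h$-coordinates stay independent and Haar distributed, and positive density is unaffected by deleting the finite set $\pp_0$) is the right way to settle this, and it mirrors how the companion paper \cite{RK-KM-2021} treats $\vph$. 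One correction of wording: what makes the $\pp_0$-factors harmless is not that they are ``bounded'' but that, for every choice of the constrained coordinates, the finite product of Euler factors over $p\in\pp_0$ is holomorphic and \emph{non-vanishing} on $\sigma>\frac{1}{2}$ (each term satisfies $\big|a_m^{(j)}p_m^{-(s+\alpha_0+\beta_0)f(j,m)}\big|\le p_m^{-(\sigma+\alpha_0)}<1$ there), so multiplication by it is a homeomorphism of $H(D_M)$ that preserves $S_\varphi$; each conditional law given $\omega_0\in K_0$ then has support exactly $S_\varphi$, and hence so does the mixture. With that point made precise, your argument is complete and in fact supplies the justification that the paper's one-line citation leaves implicit.
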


\begin{proof}
For the proof and comments, see \cite[Lemma~4.3]{RK-KM-2015}.
\end{proof}

\section{Proof of Theorem~\ref{rk-dis-th-new} }\label{sec-4}

The main result -- Theorem~\ref{rk-dis-th-new} -- we get as a consequence of Theorems~\ref{theorem-6-new} and \ref{support} with the aid of the Mergelyan theorem (see \cite{SNM-1952}) on approximation of analytic functions by polynomials. Since it goes in a standard way, we give only a sketch.

By Mergelyan's theorem, there exist polynomials $p_1(s)$ and $p_2(s)$ such that
\begin{align}\label{polyn-p-1}
	\sup_{s \in K_1}\big|f_1(s)-\exp(p_1(s))\big|<\frac{\varepsilon}{2}
\end{align}
and 
\begin{align}\label{polyn-p-2}
	\sup_{s \in K_2}\big|f_1(s)-p_2(s)\big|<\frac{\varepsilon}{2}
\end{align}
for any $\varepsilon>0$.
Define the set $G \subset \uH$ by
$$
G=\bigg\{
(g_1,g_2)\in \uH:\  \sup\limits_{s\in K_1}|g_1(s)-\exp(p_1(s))|<\frac{\varepsilon}{2}, \
\sup\limits_{s\in K_2}|g_2(s)-p_2(s)|<\frac{\varepsilon}{2}
\bigg\}.
$$
In view of Theorem~\ref{support}, $G$ is an open subset of the space $\uH$ and  an open neighbourhood of the element $\big(\exp(p_1(s)),p_2(s)\big)$ of the support for $P_{\uZ}$. Hence, $P_{\uZ h}(G)>0$. Moreover, by Theorem~\ref{theorem-6-new} and an equivalent statement of the weak convergence in terms of open sets, it is shown that
\begin{align}\label{equiv}
	\liminf\limits_{N \to \infty}\frac{1}{N+1}\#
	\bigg\{	0 \leq k \leq N:  \ \uZ(\us+ikh) \in G \bigg\} \geq P_{\uZ}(G)>0
\end{align}
or, by the definition of the set $G$,
\begin{align*}
	\liminf\limits_{N \to \infty}\frac{1}{N+1}\#
	\bigg\{
	0 \leq k \leq N:\  & \sup\limits_{s\in K_1}\big|\varphi_h(s+ikh)-\exp(p_1(s))\big|<\frac{\varepsilon}{2},\cr
	&\sup\limits_{s\in K_2}\big|\zeta(s+ikh,\alpha;\gb)-p_2(s)\big|<\frac{\varepsilon}{2}
	\bigg\}>0.
\end{align*}
Combining the last inequality and \eqref{equiv}, we obtain the assertion of Theorem~\ref{rk-dis-th-new}.


\end{document}